\newtheorem{thm}{Theorem}[section]
\newcommand{\bthm}{\begin{thm}}
\newcommand{\ethm}{\end{thm}}
\newtheorem{thmi}{Theorem}
\newcommand{\bthmi}{\begin{thmi}}
\newcommand{\ethmi}{\end{thmi}}
\newtheorem{cori}[thmi]{Corollary}
\newcommand{\bcori}{\begin{cori}}
\newcommand{\ecori}{\end{cori}}
\newtheorem{mthm}{Theorem}
\newcommand{\bmthm}{\begin{mthm}}
\newcommand{\emthm}{\end{mthm}}
\newtheorem{mcor}[mthm]{Corollary}
\newcommand{\bmcor}{\begin{mcor}}
\newcommand{\emcor}{\end{mcor}}
\newtheorem*{conj}{Conjecture}
\newcommand{\bconj}{\begin{conj}}
\newcommand{\econj}{\end{conj}}
\newtheorem*{question}{Question}
\newcommand{\bq}{\begin{question}}
\newcommand{\eq}{\end{question}}
\newtheorem*{thn}{Theorem}
\newcommand{\bthn}{\begin{thn}}
\newcommand{\ethn}{\end{thn}}
\newtheorem{exo}{Exercise}
\newcommand{\bex}{\begin{exo}}
\newcommand{\eex}{\end{exo}}
\newtheorem{sol}{Solution}
\newcommand{\bsol}{\begin{sol}}
\newcommand{\esol}{\end{sol}}
\newtheorem{pro}[thm]{Proposition}
\newcommand{\bpro}{\begin{pro}}
\newcommand{\epro}{\end{pro}}
\newtheorem{cor}[thm]{Corollary}
\newcommand{\bcor}{\begin{cor}}
\newcommand{\ecor}{\end{cor}}
\newtheorem{lem}[thm]{Lemma}
\newcommand{\blem}{\begin{lem}}
\newcommand{\elem}{\end{lem}}
\theoremstyle{definition}
\newtheorem{defi}[thm]{Definition}
\newcommand{\bdf}{\begin{defi}}
\newcommand{\edf}{\end{defi}}
\newtheorem*{defis}{Definition}
\newcommand{\bdfs}{\begin{defis}}
\newcommand{\edfs}{\end{defis}}
\newtheorem*{rmk}{Remark}
\newcommand{\brk}{\begin{rmk} \upshape}
\newcommand{\erk}{\end{rmk}}
\newtheorem*{rmks}{Remarks}
\newcommand{\brks}{\begin{rmks} \upshape}
\newcommand{\erks}{\end{rmks}}
\newtheorem*{exe}{Example}
\newcommand{\bexe}{\begin{exe} \upshape}
\newcommand{\eexe}{\end{exe}}
\newtheorem*{exes}{Examples}
\newcommand{\bexes}{\begin{exes} \upshape}
\newcommand{\eexes}{\end{exes}}
\newtheorem*{pre}{Proof}
\newcommand{\bp}{\begin{pre} \upshape}
\newcommand{\ep}{\hfill \qed \end{pre}}
\newcommand{\epp}{\end{pre}}
\newtheorem{mainthm}{Theorem}
\newtheorem{maincor}[mainthm]{Corollary}
\newcommand{\beq}{\begin{eqnarray*}}
\newcommand{\eeq}{\end{eqnarray*}}
\newcommand{\beqn}{\begin{equation}}
\newcommand{\eeqn}{\end{equation}}
\newcommand{\ben}{\begin{enumerate}}
\newcommand{\een}{\end{enumerate}}
\newcommand{\bit}{\begin{itemize} \renewcommand{\labelitemi}{$\bullet$} \renewcommand{\labelitemii}{$\star$}}
\newcommand{\eit}{\end{itemize}}
\newcommand{\bfg}{
\begin{figure}[H]
\begin{center}}
\newcommand{\efg}{
\end{center}
\end{figure}
\FloatBarrier}
\newcolumntype{M}[1]{>{\raggedright}m{#1}}
\newcommand{\R}{\mathbb{R}}
\newcommand{\N}{\mathbb{N}}
\newcommand{\Z}{\mathbb{Z}}
\newcommand{\E}{\mathbb{E}}
\newcommand{\K}{\mathbb{K}}
\newcommand{\F}{\mathbb{F}}
\newcommand{\bs}{\symbol{92}}
\newcommand{\ov}{\overline}
\newcommand{\Out}{\operatorname{Out}}
\newcommand{\st}{\, | \,}
\newcommand{\ra}{\rightarrow}
\newcommand{\liml}{\lim\limits}
\newcommand{\limlinf}{\liminf\limits}
\newcommand{\f}{\frac}
\renewcommand{\geq}{\geqslant}
\renewcommand{\leq}{\leqslant}
\newcommand{\SL}{\operatorname{SL}}
\renewcommand{\>}{\rangle}
\newcommand{\pif}{{+\infty}}
\newcommand{\mk}{\medskip}
\newcommand{\sign}{\begin{flushright}
Thomas Haettel \\
Universit\'e de Montpellier \\
Institut Montpelli\'erain Alexander Grothendieck \\
CC051 \\
Place Eug\`ene Bataillon \\
34095 Montpellier Cedex 5 \\
France \\
thomas.haettel@math.univ-montp2.fr
\end{flushright}}
\def\Ddots{\mathinner{\mkern1mu\raise\p@
\vbox{\kern7\p@\hbox{.}}\mkern2mu
\raise4\p@\hbox{.}\mkern2mu\raise7\p@\hbox{.}\mkern1mu}}
\def\maketitles{%
  \null
  \thispagestyle{empty}%
  \vfill
  \begin{center}\leavevmode
    \normalfont
    {\LARGE \@title\par}%
    \vskip 1.2cm
    {\large \@author\par}%
    \vskip 1.2cm
    {\large \@subtitle\par}%
    \vskip 0.8cm
    {\large \@date\par}%
  \end{center}%
  \vfill
  \null
  \cleardoublepage
  }
\def\date#1{\def\@date{#1}}
\def\author#1{\def\@author{#1}}
\def\title#1{\def\@title{#1}}
\def\subtitle#1{\def\@subtitle{#1}}
\newcommand{\calf}{\mathcal{F}}
\newcommand{\calt}{\mathcal{T}}
\newcommand{\calp}{\mathcal{P}}
\newcommand{\calz}{\mathcal{Z}}
\newcommand{\grp}[1]{\langle #1\rangle}
\newcommand{\normal}{\triangleleft}
\newcommand{\dunion}{\coprod}
\newcommand{\bbZ}{\mathbb{Z}}
\newtheorem{de}{Definition}
\newtheorem{theo}[de]{Theorem} 
\newtheorem{lemma}[de]{Lemma}
\newtheorem{coro}[de]{Corollary}
\theoremstyle{remark}
\title{Hyperbolic rigidity of higher rank lattices}
\author{Thomas Haettel \\ Appendix by Vincent Guirardel and Camille Horbez}
\date{\today}
\begin{document}

\selectlanguage{english}

\maketitle

\begin{center}
\begin{minipage}{0.8\textwidth}
\textsc{Abstract.} We prove that any action of a higher rank lattice on a Gromov-hyperbolic space is elementary. More precisely, it is either elliptic or parabolic. This is a large generalization of the fact that any action of a higher rank lattice on a tree has a fixed point. A consequence is that any quasi-action of a higher rank lattice on a tree is elliptic, i.e. it has Manning's property (QFA). Moreover, we obtain a new proof of the theorem of Farb-Kaimanovich-Masur that any morphism from a higher rank lattice to a mapping class group has finite image, without relying on the Margulis normal subgroup theorem nor on bounded cohomology. More generally, we prove that any morphism from a higher rank lattice to a hierarchically hyperbolic group has finite image. In the Appendix, Vincent Guirardel and Camille Horbez deduce rigidity results for morphisms from a higher rank lattice to various outer automorphism groups.
\end{minipage}
\end{center}

\let\thefootnote\relax\footnotetext{{\bf Keywords} : Higher rank lattices, Gromov hyperbolic, rigidity, property (T), coarse median spaces, quasi-actions, random walks, mapping class groups, hierarchically hyperbolic groups. {\bf AMS codes} : 22E40, 53C24, 20F67, 19J35, 05C81, 60J65}

\section*{Introduction}
\

Higher rank semisimple algebraic groups over local fields, and their lattices, are well-known to enjoy various rigidity properties. The main idea is that they cannot act on any other space than the ones naturally associated to the Lie group. This is reflected notably in the Margulis superrigidity theorem, and is also the motivating idea of Zimmer's program.

Concerning the rigidity of isometric actions, the most famous example is Kazhdan's property (T), which tells us that higher rank lattices cannot act by isometries without fixed point on Hilbert spaces. In fact, property (T) also implies such a fixed point property for some $L^p$ spaces (see~\cite{bader_furman_gelander_monod}), for trees (see~\cite{serre_fa}), and more generally for metric median spaces (such as CAT(0) cube complexes, see~\cite{chatterji_drutu_haglund}).

Property (T) is also satisfied notably by hyperbolic quaternionic lattices and by some random hyperbolic groups (see~\cite{zuk}). There have been various strengthenings of property (T), which are all satisfied by higher rank lattices but not by hyperbolic groups, which imply fixed point properties for various actions on various Banach spaces (see for instance~\cite{lafforgue_t} and \cite{monod}).

Since Gromov-hyperbolic spaces play a central role in geometric group theory, understanding actions of higher rank lattices on Gromov-hyperbolic spaces is an extremely natural question. There are several partial answers to that question, for instance any action on a tree or on a symmetric space of rank $1$ has a fixed point. Manning proved that, for $\SL(n,\Z)$ with $n \geq 3$ and some other boundedly generated groups, any action on quasi-tree is bounded (see~\cite{manning}). Using V.~Lafforgue's strengthened version of property (T) (see~\cite{lafforgue_t}, \cite{liao}, \cite{delaat_delasalle}), one deduces that if $\Gamma$ is a cocompact lattice in a higher rank semisimple algebraic group, then any action of $\Gamma$ by isometries on a uniformly locally finite Gromov-hyperbolic space is bounded.

The main purpose article is to prove the following.

\begin{mainthm} \label{thm:main}
Let $\Gamma$ be a lattice in (a product) of higher rank almost simple connected algebraic groups with finite centers over a local field. Then any action of $\Gamma$ by isometries on a Gromov-hyperbolic metric space is elementary. More precisely, it is either elliptic or parabolic.
\end{mainthm}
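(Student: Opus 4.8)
The plan is to go through Gromov's classification of isometric actions on a Gromov-hyperbolic space and eliminate every type except elliptic and parabolic. First I would reduce to $X$ geodesic by replacing it with the injective hull of an orbit $\Gamma x_0$: this is a geodesic hyperbolic space on which $\Gamma$ acts isometrically and coboundedly, with the same dynamical type. After this reduction the action is elliptic, parabolic (horocyclic: a unique fixed boundary point, no loxodromic), lineal (a fixed pair of boundary points, with loxodromics), focal (a unique fixed boundary point, with loxodromics) or of general type (two independent loxodromics); it remains to rule out the last three.

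For the lineal and focal cases I would use property (T), which every higher rank lattice enjoys: it gives $H^1(\Gamma;\R)=0$, and combined with the Burger--Monod injectivity $H^2_b(\Gamma;\R)\hookrightarrow H^2(\Gamma;\R)$ it gives that $\Gamma$ has no unbounded homogeneous quasi-morphism (alternatively, since our action is cobounded, one argues directly that the relevant Busemann quasi-character is a genuine character). A lineal action yields, on a subgroup of index at most two, a homomorphism $\Gamma_0\to\R$ (signed translation length along the axis) which is non-zero because there is a loxodromic; a focal action fixing $\xi\in\partial X$ yields the Busemann quasi-character $\beta_\xi\colon\Gamma\to\R$, a homogeneous quasi-morphism which is non-zero on loxodromic elements. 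Both contradict the vanishing statements.

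The core of the proof is the general type case. Here the limit set $\Lambda=\Lambda_\Gamma\subset\partial X$ is an infinite compact metrizable $\Gamma$-space which is minimal and strongly proximal (from the north--south dynamics of independent loxodromics) and on which $\Gamma$ acts as a convergence group; in particular $\Gamma$ acts properly discontinuously on the space $\Lambda^{(3)}$ of distinct triples (Tukia, Bowditch). Now I bring in random walks: fix an admissible, finitely supported probability measure $\mu$ on $\Gamma$. Since the action is non-elementary the drift is positive and almost every sample path converges to $\partial X$ with non-atomic hitting measure $\nu$ supported on $\Lambda$ (Maher--Tiozzo); the convergence dynamics force the conditional measures over the Poisson boundary $B(\Gamma,\mu)$ to be Dirac masses, giving a $\Gamma$-equivariant measurable map $B(\Gamma,\mu)\to\Lambda$. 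Thus $\Lambda$ is a strongly proximal $\mu$-boundary of the higher rank lattice $\Gamma$, so by the structure theory of $\mu$-boundaries (the projective factor theorem) it is $\Gamma$-equivariantly isomorphic — measurably, then, using minimality of both actions and the rigidity of topological factors of flag varieties, topologically — to a flag variety $G/Q=\prod_i G_i/Q_i$ for a parabolic $Q$; since $\Lambda$ is infinite, $Q\subsetneq G$ is proper.

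To conclude I would exhibit a distinct triple in $\Lambda$ with infinite $\Gamma$-stabilizer, contradicting proper discontinuity on $\Lambda^{(3)}$. This is precisely where rank $\geq 2$ enters: when every almost simple factor of $G$ has rank at least $2$, every proper parabolic $Q$ satisfies $|W/W_Q|\geq 3$. As $\Gamma$ is Zariski dense it contains an infinite order element $\gamma$ that is $k$-split regular semisimple (for lattices in products, split regular in each factor whose parabolic is proper), by Prasad--Rapinchuk; such a $\gamma$ fixes the $|W/W_Q|\geq 3$ standard points of $G/Q\cong\Lambda$, so $\langle\gamma\rangle$ fixes a point of $\Lambda^{(3)}$ — impossible. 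Hence the action is elliptic or parabolic. The step I expect to be the main obstacle is the identification of $\Lambda$ with an algebraic flag variety: the measurable part rests on the full boundary theory of higher rank lattices (Poisson boundaries as flag varieties, classification of $\mu$-boundaries), uniformly over products and all local fields, and the upgrade from the measurable to the topological category — needed so that the convergence-group dichotomy applies to genuine points — is itself delicate. Secondary issues are the positivity of the drift and boundary convergence in this generality, the injective-hull reduction, and producing good split regular elements in non-uniform lattices.
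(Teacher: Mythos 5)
Your route is essentially the Bader--Furman ergodic-theoretic one that the paper explicitly sets aside, not the paper's. The paper never invokes boundary theory of $\Gamma$: it induces the action to an isometric action of the ambient group $G$ on the coarse median space $Y=L^1(G/\Gamma,X)$ (using Shalom's integrability of the cocycle in the non-uniform case), proves via asymptotic cones that $G$-orbits in any coarse median space grow sublinearly (the key point being that an affine building of rank $\geq 2$ admits no Lipschitz equivariant family of geodesics into a median space, because walls would have to cut flats along parallel lines), then discretizes Brownian motion on the symmetric space (or the simple random walk on the building) \`a la Lyons--Sullivan to produce a measure on $\Gamma$ whose random walk has \emph{zero drift} in $X$, and finally invokes Maher--Tiozzo to conclude the action is not of general type; lineal/focal actions are killed by Burger--Monod exactly as you propose. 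So your treatment of the cases with a loxodromic and a fixed boundary point matches the paper, and is fine.

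Your general-type analysis, however, has a genuine gap beyond the measurable-to-topological upgrade you already flag. The space $X$ is not assumed locally compact and the action is not assumed proper (the paper emphasizes this), so the limit set $\Lambda$ need not be compact and the Tukia--Bowditch statement that $\Gamma$ acts as a convergence group, properly discontinuously on $\Lambda^{(3)}$, is unavailable; it fails already for $\Gamma_0\times F_2$ acting on the tree of $F_2$ through the projection, where every triple has infinite stabilizer. Even granting properness of $X$, your closing contradiction is too weak: an isometry fixing three distinct boundary points is merely elliptic, and an infinite subgroup stabilizing a triple just has bounded orbits --- that contradicts nothing unless proper discontinuity on triples has already been established, which is exactly what is missing. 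The way this argument is actually closed (and what Bader--Furman do) is to stay measurable: show that any $\Gamma$-equivariant measurable map $G/Q\to\partial X$ must have essentially finite image in higher rank, by playing two proper parabolics generating $G$ against each other, which yields a finite orbit in $\partial X$ and contradicts general type directly, with no appeal to convergence dynamics or to a topological identification of $\Lambda$. A secondary issue: for the projective factor theorem you need the Poisson boundary of $(\Gamma,\mu)$ to be the full flag variety, i.e.\ a Furstenberg-type measure with infinite support, not an arbitrary finitely supported admissible $\mu$.
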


\brks\
\bit
\item This result has also been announced by Bader and Furman, as it should be a consequence of their deep work on rigidity and boundaries (see notably~\cite[Theorem~4.1]{bader_furman_icm} for convergence actions of lattices). However, the techniques are essentially different: Bader and Furman use a lot of ergodic theory, while in this article we use very little of it, and focus mostly on the asymptotic geometry of lattices and buildings, making a crucial use of medians.
\item One should note that the hyperbolic space in the theorem is not assumed to be locally compact, nor the action is assumed to satisfy any kind of properness. 
\item Note that most rigidity results conclude to the boundedness of orbits. Since any finitely generated group has a metrically proper, parabolic action on a hyperbolic space (locally infinite in general), one needs to include those parabolic actions (see for instance~\cite{hruska}).
\item Even though they do not appear in the statement, the theory of coarse median spaces developed by Bowditch (see~\cite{bowditch_coarse_median}) plays a crucial role in the proof.
\item In the theorem, we have to assume that each almost simple factor has higher rank. Our methods use the induction to the ambient group, so we cannot study irreducible lattices in products of rank $1$ groups. However, in this case, Bader and Furman can prove the following: for any irreducible lattice in a product of at least two groups, any isometric action on a hyperbolic space $X$ without bounded orbits in $X$ or finite orbits in $\partial X$, there is a closed subset of the boundary on which the action extends to one factor.
\item Whereas most rigidity results concerning higher rank lattices use bounded cohomology, Margulis superridigidity or normal subgroup theorems or V.~Lafforgue's strengthenings of property (T), our proof uses really new ingredients, and in particular medians spaces.
\eit
\erks

In~\cite{manning}, Manning was motivated by the question of quasi-actions of groups of trees. For the precise definition of a quasi-action, we refer to Section~\ref{sec:cor}. A straightforward consequence of Theorem~\ref{thm:main} is the following.

\begin{maincor} \label{cor:qfa}
Let $\Gamma$ be as in Theorem~\ref{thm:main}. Then any quasi-action of $\Gamma$ by isometries on a tree is elliptic. In other words, $\Gamma$ has Manning's property (QFA).
\end{maincor}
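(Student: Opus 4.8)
The plan is to deduce the corollary from Theorem~\ref{thm:main} through Manning's dictionary between quasi-actions on trees and isometric actions on quasi-trees. Recall from~\cite{manning} that a finitely generated group $G$ has property (QFA) if and only if every isometric action of $G$ on a quasi-tree has bounded orbits: from a quasi-action of $G$ on a tree $T$ one builds, out of $T$ and the quasi-action, an honest isometric $G$-action on a quasi-tree $Q$, and the original orbits on $T$ are bounded precisely when the orbits on $Q$ are. Since $\Gamma$ is finitely generated (indeed finitely presented), it therefore suffices to show that every isometric action of $\Gamma$ on a quasi-tree has bounded orbits.

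Now any quasi-tree $Q$ is quasi-isometric to a tree, hence Gromov-hyperbolic, so Theorem~\ref{thm:main} applies to the $\Gamma$-action on $Q$: it is either elliptic or parabolic. If it is elliptic the orbits are bounded and we are done, so the one remaining point is to exclude a parabolic action of $\Gamma$ on a quasi-tree.

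Suppose for contradiction that the action is parabolic: then $\Gamma$ fixes a unique point $\xi\in\partial Q$, every element acts elliptically, and some orbit is unbounded. The heart of the argument is the tree-like geometry of $Q$: since any two geodesic rays of a tree converging to the same end eventually coincide, in the hyperbolic space $Q$ all geodesic rays toward $\xi$ lie within a uniformly bounded Hausdorff distance of one another, so an isometry of $Q$ that is elliptic and fixes $\xi$ coarsely preserves every geodesic ray toward $\xi$, with a constant depending only on $Q$. Applying this to a finite generating set of $\Gamma$ and taking the coarse common part of the finitely many preserved rays yields a single point of $Q$ moved a uniformly bounded amount by each generator, hence a quasi-fixed point for the whole group, contradicting unboundedness of the orbit. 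Thus the action on $Q$ is elliptic, and by the equivalence above $\Gamma$ has property (QFA). The step I expect to require the most care is formulating ``rays to a common end coincide coarsely'' rigorously in a general quasi-tree rather than in an honest tree; this is precisely what Manning's framework is designed to handle, and the only new input is that Theorem~\ref{thm:main} cuts the problem down to the parabolic case.
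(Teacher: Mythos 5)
Your overall reduction is exactly the paper's: use Manning's construction (Proposition~\ref{pro:quasi tree}) to replace the quasi-action on a tree by an honest isometric action on a quasi-tree, apply Theorem~\ref{thm:main} to conclude the action is elliptic or parabolic, and then rule out the parabolic case using finite generation of $\Gamma$. The paper simply invokes the fact that a finitely generated group has no parabolic isometric action on a quasi-tree (a true statement, essentially contained in Manning's work). You instead attempt to prove it, and your argument for this step has a genuine gap.

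Two problems. A minor one: the claim that all geodesic rays toward $\xi$ lie at uniformly bounded Hausdorff distance from one another is false already in a simplicial tree --- two rays to the same end eventually coincide, but before merging they can be arbitrarily far apart, so the bound depends on the basepoints. (What is true, and what you actually need, is that $r$ and $g\cdot r$ are at Hausdorff distance controlled by $d(x,gx)$ and $\delta$.) The fatal one is the conclusion: producing ``a single point of $Q$ moved a uniformly bounded amount by each generator, hence a quasi-fixed point for the whole group'' is a non sequitur, because \emph{every} point of \emph{any} metric space acted on by a finitely generated group is moved a bounded amount by each generator (take the maximum over the finite generating set), and this never implies bounded orbits for the whole group --- displacements accumulate along words. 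Indeed, as the paper itself recalls in the introduction, every finitely generated group admits a metrically proper \emph{parabolic} action on some (locally infinite) hyperbolic space; in such an action each generator displaces the basepoint boundedly while the orbit is unbounded. Since at this final step your argument uses nothing beyond general $\delta$-hyperbolicity, it would ``prove'' that finitely generated groups have no parabolic actions on hyperbolic spaces, which is false. The correct argument must use the tree geometry more strongly: in an honest tree, an elliptic isometry fixing the end $\xi$ fixes \emph{pointwise} (not merely coarsely preserves) a terminal subray of every ray to $\xi$, so finitely many generators share an exact common fixed point far enough along the ray, and the subgroup they generate fixes that point. Transferring this exact statement to quasi-trees is precisely what requires Manning's bottleneck machinery (a naive coarsification fails for the same error-accumulation reason); you should either carry that out or cite the fact, as the paper does.
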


Another major consequence of Theorem~\ref{thm:main} is another proof of the following.

\begin{maincor}[Farb-Kaimanovich-Masur~\cite{farb_masur}, \cite{kaimanovich_masur}] \label{cor:mcg} 
Let $\Gamma$ be as in Theorem~\ref{thm:main}, and let $S_{g,p}$ be a closed surface of genus $g$ with $p$ punctures. Then any morphism $\Gamma \ra MCG(S_{g,p})$ has finite image.
\end{maincor}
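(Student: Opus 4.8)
The plan is to feed the curve graph of the surface into Theorem~\ref{thm:main} and run an induction on the \emph{complexity} $\xi(S_{g,p}) = 3g-3+p$. It is cleanest to prove the statement simultaneously for \emph{all} lattices $\Lambda$ satisfying the hypotheses of Theorem~\ref{thm:main}: every such $\Lambda$, and every finite-index subgroup of it (which is again a lattice of the same kind), has Kazhdan's property (T), hence finite abelianization, hence no subgroup of finite index surjecting onto $\Z$. The base case is when $S_{g,p}$ carries no essential simple closed curve, i.e. $S$ is a sphere with at most three punctures: then $MCG(S)$ is finite and there is nothing to prove.

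For the inductive step, assume $S$ carries an essential curve, so the curve graph $\mathcal{C}(S)$ is defined --- with the Farey-graph convention when $\xi(S) = 1$, which is needed so that $\mathcal{C}(S)$ is still Gromov-hyperbolic (Masur--Minsky). Given $\rho \colon \Lambda \to MCG(S)$, I would let $\Lambda$ act on $\mathcal{C}(S)$ through $\rho$; by Theorem~\ref{thm:main} this action is elliptic or parabolic. In either case $\rho(\Lambda)$ contains no pseudo-Anosov element: a pseudo-Anosov acts loxodromically on $\mathcal{C}(S)$ (Masur--Minsky), which is incompatible both with having a bounded orbit and with an action that fixes a unique point at infinity and has no loxodromic isometry.

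Next I would invoke the structure theory of subgroups of mapping class groups (Birman--Lubotzky--McCarthy, Ivanov, McCarthy): an infinite subgroup of $MCG(S)$ containing no pseudo-Anosov preserves some essential multicurve. So either $\rho(\Lambda)$ is finite --- and we are done --- or, after passing to a suitable finite-index subgroup $\Lambda_0 \leq \Lambda$, there is an essential multicurve $\mu = \mu_1 \cup \dots \cup \mu_k$ each of whose components is fixed by $\rho(\Lambda_0)$. Cutting $S$ along $\mu$ and replacing the new boundary circles by punctures produces surfaces $S_1, \dots, S_m$ of complexity strictly less than $\xi(S)$, together with the cutting homomorphism $\rho(\Lambda_0) \to \prod_j MCG(S_j)$ whose kernel is the free abelian group generated by the Dehn twists $T_{\mu_1}, \dots, T_{\mu_k}$. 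By the induction hypothesis applied to the lattice $\Lambda_0$, each coordinate $\Lambda_0 \to MCG(S_j)$ has finite image, so the image of $\rho(\Lambda_0)$ in $\prod_j MCG(S_j)$ is finite; hence $\rho(\Lambda_0)$ is a finite extension of a subgroup of $\Z^k$, i.e. virtually abelian. A virtually abelian group with property (T) is finite (a finite-index $\Z^{k'}$ would inherit property (T) yet admit a surjection onto $\Z$ when $k' \geq 1$), so $\rho(\Lambda_0)$ is finite, and therefore $\rho(\Lambda)$ is finite. Taking $\Lambda = \Gamma$ completes the proof.

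I do not expect any single hard step: once Theorem~\ref{thm:main} is available, the corollary is an assembly of standard facts about $MCG(S)$ and property (T). The one point that requires genuine care is the elliptic case --- because $\mathcal{C}(S)$ is not locally compact, a bounded orbit need not contain a fixed vertex, so one cannot simply read off an invariant curve and must instead go through the Tits-alternative-type classification of subgroups (no pseudo-Anosov $\Rightarrow$ virtually reducible). The remaining delicate bookkeeping --- the exact kernel of the cutting homomorphism, the complexity drop after cutting, and propagating the statement to the finite-index subgroups $\Lambda_0$ so the induction closes --- is routine but should be written out carefully.
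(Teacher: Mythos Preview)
Your proposal is correct and follows essentially the same route as the paper: apply Theorem~\ref{thm:main} to the action on the hyperbolic curve graph $\mathcal{C}(S)$, invoke Ivanov's classification to conclude the image is reducible (ruling out the virtually cyclic case via property~(T)), and induct on complexity by cutting along the invariant curve(s). Your write-up is in fact slightly more careful than the paper's, since you make explicit the free-abelian Dehn-twist kernel of the cutting homomorphism, whereas the paper simply asserts that the stabilizer of a curve is a product of mapping class groups of smaller complexity.
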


The proof of Farb, Masur and Kaimanovich relies notably on the very deep Margulis normal subgroup theorem. Our purpose here is to give a proof as simple as possible, and we will not rely on any such deep theorem in the uniform case, and in the non-uniform one case we will use Margulis arithmeticity theorem only to ensure that the associated cocycle is integrable. In particular, in the proof of Corollary~\ref{cor:mcg}, we will not even use Burger-Monod's result that higher rank lattices do not have unbounded quasi-morphisms. We will simply use the fact that higher rank lattices do not surject onto $\Z$ (it is a direct consequence of property (T)) and use the weaker form of Theorem~\ref{thm:main} stating that every action of a higher rank lattice on a hyperbolic space is elementary.

\mk

In fact, we can apply the exact same proof as in Corollary~\ref{cor:mcg} to the more general class of hierarchically hyperbolic groups. They have been defined and studied in several articles (see~\cite{hhg1}, \cite{hhg2}, \cite{hhg3}, \cite{hhg4}), and since the definition is technical and irrelevant for the rest of the article, we refer to these articles for the precise definitions and main results. Roughly speaking, hierarchically hyperbolic spaces are metric spaces with a nice collection of projections to hyperbolic spaces, organized with some hierarchical structure. Notable examples of hierarchically hyperbolic groups include hyperbolic groups, mapping class groups, right-angled Artin groups, and they are stable under relative hyperbolicity.

\begin{maincor} \label{cor:hhg} 
Let $\Gamma$ be as in Theorem~\ref{thm:main}, and let $G$ be a hierarchically hyperbolic group. Then any morphism $\Gamma \ra G$ has finite image.
\end{maincor}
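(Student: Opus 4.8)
The plan is to run the argument of Corollary~\ref{cor:mcg} essentially verbatim, replacing the curve graph of a subsurface by the coordinate hyperbolic space $\mathcal C S$ attached to a domain $S$ of the hierarchically hyperbolic structure of $G$, and replacing Ivanov's reducibility theory for subgroups of mapping class groups by the corresponding structure theory for subgroups of hierarchically hyperbolic groups. Fix a morphism $\rho\colon\Gamma\ra G$; we must show $H:=\rho(\Gamma)$ is finite. Property (T) passes to quotients and to finite-index subgroups, and finite-index subgroups of $\Gamma$ are again lattices of the kind considered in Theorem~\ref{thm:main}; so $H$ is finitely generated with property (T), it has finite abelianisation, it does not surject onto $\Z$, and it is not infinite virtually cyclic. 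Since $G$ acts metrically properly on itself, ``$H$ is finite'' is equivalent to ``the $H$-orbit of the identity is bounded in $G$''.

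I would argue by induction on the complexity of the hierarchically hyperbolic structure $\mathfrak S$ of $G$, that is, on the length of a longest $\sqsubseteq$-chain in $\mathfrak S$. For the base case the complexity is one, so $G$ is hyperbolic (quasi-isometric to its unique coordinate space) and $H$ acts by isometries on a hyperbolic space; by (the weak form of) Theorem~\ref{thm:main} this action is elementary, hence $H$ either has a bounded orbit --- and is therefore finite, $G$ acting properly on itself --- or lies in a virtually cyclic subgroup of $G$ (the stabiliser of a point or of a pair of points at infinity), in which case $H$ is finite by property (T). For the inductive step, let $S$ be the unique $\sqsubseteq$-maximal element of $\mathfrak S$; it is fixed by $G$, hence by $H$, and $H$ acts by isometries on the hyperbolic space $\mathcal C S$, so by Theorem~\ref{thm:main} this action is elementary and in particular no element of $H$ acts loxodromically on $\mathcal C S$. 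Granting that such a subgroup is ``reducible'' --- it coarsely stabilises a standard product region $P_U$ for some proper domain $U\sqsubsetneq S$, or else $S$ decomposes as a nontrivial orthogonal join, the degenerate case in which one simply deletes $S$ and recurses --- we pass to a finite-index subgroup $\Gamma_1\leq\Gamma$ (still a lattice of the same kind) so that $\rho(\Gamma_1)$ is contained, after conjugation, in the product-region subgroup $Q_U\leq G$. The subgroup $Q_U$ is itself a hierarchically hyperbolic group whose index set is the disjoint union of the nested part $\mathfrak S_U$ and the orthogonal part $\mathfrak S_U^{\perp}$, between which there are no nesting relations; hence its complexity equals $\max\bigl(\mathrm{compl}(\mathfrak S_U),\mathrm{compl}(\mathfrak S_U^{\perp})\bigr)$, which is strictly less than $\mathrm{compl}(\mathfrak S)$ because $S$ lies above, but outside, both $\mathfrak S_U$ and $\mathfrak S_U^{\perp}$. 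By the induction hypothesis applied to the morphism $\Gamma_1\to Q_U$, the group $\rho(\Gamma_1)$ is finite; since it has finite index in $H$, the group $H$ is finite, as desired. (The free-abelian ``Dehn twist'' subgroups that are treated separately in the mapping-class-group argument occur here as the complexity-one domains $W$ with $\mathcal C W\simeq\R$; $\Gamma$ acts with bounded orbit on such a quasi-line, which is exactly where ``$\Gamma$ does not surject onto $\Z$'' is used.)

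The one genuinely hard point is the reducibility dichotomy invoked above: that a subgroup of a hierarchically hyperbolic group acting on the top-level coordinate space $\mathcal C S$ with no loxodromic element must coarsely be confined to a single standard product region, the remaining hyperbolic directions being recorded by strictly-lower-complexity, and ultimately abelian, factors. For mapping class groups this is automatic, since the action on every curve graph is acylindrical and therefore has no infinite parabolic subgroup at all: ``no pseudo-Anosov element'' forces a fixed multicurve, which is exactly Ivanov's reducibility theorem. For a general hierarchically hyperbolic group the action on $\mathcal C S$ need not be acylindrical --- for instance when $\mathcal C S$ is bounded, or for direct products --- so one cannot simply rule out a parabolic action of $H$ on $\mathcal C S$; instead one must use the full hierarchical structure to show that an elliptic \emph{or parabolic} action is still swept into a product region. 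Establishing this hierarchical analogue of Ivanov's theorem (and handling the degenerate ``$\mathcal C S$ bounded'' case, where the structure factors) is where the hierarchy, and not merely the hyperbolicity of $\mathcal C S$, is essential.
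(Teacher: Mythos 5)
Your overall architecture is the same as the paper's: induct on the complexity of the hierarchical structure, let $H$ act on the top-level hyperbolic space $\mathcal{C}(S)$, invoke Theorem~\ref{thm:main} to make that action elementary, use property (T) (no finite-index subgroup surjects onto $\Z$) to exclude the virtually cyclic case, and then ``reduce'' to a proper domain $U\subsetneq S$ of strictly smaller complexity. However, you explicitly grant, rather than prove, the key reducibility dichotomy (``a subgroup with bounded orbits in $\mathcal{C}(S)$ virtually fixes a proper domain / is confined to a standard product region''), and you yourself flag it as the one genuinely hard point. That is a real gap: without it the induction never gets off the ground. The paper fills it by citation: the passage from ``bounded orbits in $\mathcal{C}(S)$'' to ``some finite-index subgroup of $H$ fixes some $U\in\mathfrak{S}$ with $U\subsetneq S$'' is extracted from the proof of Theorem~9.15 of Durham--Hagen--Sisto (cited as [hhg4] in the paper), which is precisely the hierarchical analogue of Ivanov's theorem you are asking for.

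Your diagnosis of \emph{why} the step is hard is also off. For a hierarchically hyperbolic \emph{group} (as opposed to a general HHS), the action of $G$ on the maximal coordinate space $\mathcal{C}(S)$ \emph{is} acylindrical --- this is Corollary~14.4 of Behrstock--Hagen--Sisto ([hhg1]), and it holds trivially even when $\mathcal{C}(S)$ is bounded (e.g.\ for direct products), since acylindricity is a condition on pairs of points at large distance. The paper uses exactly this: acylindricity plus Osin's classification shows that an elementary action with unbounded orbits forces $H$ to be virtually cyclic, which property (T) excludes; hence $H$ has bounded orbits in $\mathcal{C}(S)$, and there is no residual parabolic case to worry about. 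So the worry that ``one cannot rule out a parabolic action of $H$ on $\mathcal{C}(S)$'' is unfounded at the top level; the only place where genuine hierarchical input (beyond hyperbolicity of $\mathcal{C}(S)$) is needed is the bounded-orbit-implies-reducible step, and there you must either cite the Durham--Hagen--Sisto result or reprove it, neither of which your proposal does.
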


Another classical generalization of hyperbolic groups is the class of acylindrical hyperbolic groups, developed notably by Osin (see~\cite{osin}). A group is called acylindrically hyperbolic if it admits a non-elementary acylindrical action on a hyperbolic space. Classical examples include (relatively) hyperbolic groups, mapping class groups, outer automorphism groups of free groups, and many others. Following Mimura (see~\cite{mimura}), we say that a subgroup $H$ of an acylindrically hyperbolic group $G$ is absolutely elliptic if, for every acylindrical action of $G$ on a hyperbolic space, $H$ acts elliptically. Note that Mimura proved the following result for Chevalley groups.

\begin{maincor} \label{cor:ah} 
Let $\Gamma$ be as in Theorem~\ref{thm:main}, and let $G$ be an acylindrically hyperbolic group. Then any morphism $\Gamma \ra G$ has universally elliptic image.
\end{maincor}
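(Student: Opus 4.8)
The plan is to deduce this directly from Theorem~\ref{thm:main} together with the elementary structure theory of acylindrical actions. Fix a morphism $\varphi \colon \Gamma \ra G$ and an arbitrary acylindrical action of $G$ on a hyperbolic space $X$. Since acylindricity of an action is a condition quantified over the group elements, its restriction to any subgroup is again acylindrical; in particular $\varphi(\Gamma)$ acts acylindrically on $X$. On the other hand, composing with $\varphi$ gives an isometric action of $\Gamma$ on $X$, to which Theorem~\ref{thm:main} applies: this action is elliptic or parabolic. Whether it is elliptic (bounded orbits) or parabolic (unbounded orbits, no loxodromic element, a unique fixed point in $\partial X$) depends only on the subgroup $\varphi(\Gamma) \leq \Isom(X)$.

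I would then exclude the parabolic case. If $\varphi(\Gamma) \curvearrowright X$ were parabolic, it would have unbounded orbits and contain no loxodromic element. But an acylindrical action with unbounded orbits always contains a loxodromic element: this is part of Osin's classification of acylindrical actions into the elliptic case, the virtually cyclic case with a loxodromic, and the general type case~\cite{osin}, none of which is of parabolic (horocyclic) type. Hence the parabolic alternative of Theorem~\ref{thm:main} cannot occur here, and $\varphi(\Gamma)$ acts elliptically on $X$, i.e. with bounded orbits.

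Since the acylindrical action $G \curvearrowright X$ was arbitrary, this proves that $\varphi(\Gamma)$ acts elliptically for every acylindrical action of $G$ on a hyperbolic space, that is, $\varphi(\Gamma)$ is universally elliptic (equivalently, absolutely elliptic in the sense of Mimura~\cite{mimura}). There is essentially no obstacle beyond Theorem~\ref{thm:main}: the only points to check are the (immediate) passage of acylindricity to subgroups and the (standard) fact that acylindrical actions exhibit no parabolic behaviour, which is precisely what makes the parabolic case of Theorem~\ref{thm:main} vacuous in this setting. In particular, in contrast with Corollary~\ref{cor:mcg}, this deduction does not even use that higher rank lattices admit no surjection onto $\Z$.
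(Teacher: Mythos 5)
Your proof is correct and follows exactly the paper's argument: apply Theorem~\ref{thm:main} to the $\Gamma$-action on $X$ induced by an arbitrary acylindrical action of $G$, then invoke Osin's classification to rule out the parabolic alternative, concluding ellipticity for every such action. The only difference is that you spell out the (standard) facts that acylindricity passes to subgroups and that acylindrical actions with unbounded orbits contain loxodromics, which the paper leaves implicit.
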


Concerning morphisms from a higher rank lattice to $\Out(\F_n)$, Bridson and Wade proved that their image is also finite (see~\cite{bridson_wade}). However, there are subgroups of $\Out(F_n)$ with bounded orbits in the free splitting complex, but with no finite orbits, so we cannot give a proof in that case which is as simple as in the mapping class group case. Nevertheless, in the Appendix, Vincent Guirardel and Camille Horbez use Theorem~\ref{thm:main} to deduce several rigidity results for morphisms to various outer automorphism groups. Let us present the following result, and refer the reader to the Appendix for the other ones.

\begin{maincor} \label{cor:main_appendix} 
Let $\Gamma$ be as in Theorem~\ref{thm:main}, and let $G$ be a torsion-free hyperbolic group. Then any morphism $\Gamma \ra \Out(G)$ has finite image.
\end{maincor}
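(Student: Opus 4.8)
The plan is to induct on a complexity of the torsion-free hyperbolic group $G$ --- for instance on the number of vertices of its canonical JSJ decomposition together with the ranks of the vertex groups --- reducing a given morphism $\rho\colon \Gamma \to \Out(G)$ to morphisms into mapping class groups, into outer automorphism groups of strictly simpler groups, and into finitely generated abelian groups, which are controlled respectively by Corollary~\ref{cor:mcg}, by the induction hypothesis, and by property~(T). Two facts will be used throughout. First, every finite-index subgroup of $\Gamma$ is again a lattice of the same kind, hence has property~(T) (so finite abelianization and property~(FA)), and $\rho(\Gamma)$ is finite whenever $\rho(\Gamma_0)$ is finite for some finite-index $\Gamma_0 \le \Gamma$; so I may pass to finite-index subgroups at will. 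Second, Theorem~\ref{thm:main} already yields that \emph{any morphism $f$ from $\Gamma$ into a hyperbolic group $H$ has finite image}: the induced action of $\Gamma$ on the Cayley graph of $H$ is elliptic or parabolic; if elliptic, the orbit is bounded in a locally finite graph hence finite, so $f(\Gamma)$ is finite; if parabolic, then $f(\Gamma)$ is an infinite subgroup of $H$ fixing a point of $\partial H$ and therefore contained in an infinite virtually cyclic subgroup, so a finite-index subgroup of $\Gamma$ surjects onto $\Z$, contradicting property~(T). For the base of the induction: if $G$ is trivial or infinite cyclic then $\Out(G)$ is finite; if $G$ is a free group, the statement is the theorem of Bridson--Wade~\cite{bridson_wade}, which one may also recover here, since $\Out(F_n)$ acts acylindrically on the Gromov-hyperbolic free factor graph, so by Theorem~\ref{thm:main} and Osin's classification of acylindrical actions~\cite{osin} the $\Gamma$-action is elliptic, whence $\Gamma$ virtually fixes the conjugacy class of a proper free factor (Handel--Mosher alternative), and one concludes by induction on $n$ as in the freely decomposable case below.

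Assume now $G$ is one-ended. Let $T$ be Bowditch's canonical cyclic JSJ splitting of $G$, which is $\Out(G)$-invariant. By Levitt's structure theorem for automorphism groups of one-ended hyperbolic groups, $\Out(G)$ has a finite-index subgroup $U$ sitting in an exact sequence
\[
1 \longrightarrow \mathcal{T} \longrightarrow U \longrightarrow \prod_{v} \MCG(\Sigma_v),
\]
where $v$ ranges over the quadratically hanging (surface-type) vertices of $T$, $\Sigma_v$ is the corresponding compact surface, and $\mathcal{T}$ is the finitely generated abelian group of twists along the (cyclic) edge groups; the rigid vertices contribute only finite outer automorphism groups relative to their incident edges, absorbed in the finite index. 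Set $\Gamma_0 = \rho^{-1}(U)$. Composing $\Gamma_0 \to U \to \prod_v \MCG(\Sigma_v)$, projecting to each factor, and reducing to punctured surfaces by absorbing the finitely many boundary Dehn twists, Corollary~\ref{cor:mcg} shows that the image of $\Gamma_0$ in $\prod_v \MCG(\Sigma_v)$ is finite. Hence a finite-index subgroup $\Gamma_1 \le \Gamma$ maps into $\mathcal{T}$, so $\rho(\Gamma_1)$ is finite by property~(T), and therefore so is $\rho(\Gamma)$.

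Finally, assume $G$ is freely decomposable but not free, with Grushko decomposition $G = A_1 \ast \cdots \ast A_k \ast F_r$, each $A_i$ one-ended, $k \ge 1$. Since $\Out(G)$ permutes the conjugacy classes $[A_1], \dots, [A_k]$, after passing to finite index $\Gamma_0$ fixes each $[A_i]$, and restriction gives morphisms $\Gamma_0 \to \Out(A_i)$ of finite image by the induction hypothesis. A finite-index subgroup $\Gamma_1 \le \Gamma$ then maps into the Fouxe--Rabinovich kernel $K = \ker\bigl(\Out(G;\{[A_i]\}) \to \prod_i \Out(A_i)\bigr)$; by the structure theory of automorphism groups of free products (Fouxe-Rabinovich, Levitt, Guirardel--Levitt), $K$ is, up to finite index, built by iterated extensions whose kernels lie among finitely generated abelian groups (generated by twists), the one-ended factors $A_i$ themselves (generated by partial conjugations), and groups $\Out(F_m)$ with $m \le r < \operatorname{rk} G$. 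Morphisms from $\Gamma_1$ into each of these have finite image --- into the abelian groups by property~(T), into the $A_i$ by the consequence of Theorem~\ref{thm:main} recorded above, and into $\Out(F_m)$ by the base case --- so $\rho(\Gamma_1)$, hence $\rho(\Gamma)$, is finite.

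The main obstacle is the freely decomposable (and the free) case: one must pin down the Fouxe--Rabinovich kernel precisely enough to exhibit it, up to finite index, as an iterated extension of the three controllable types above, and one needs acylindricity of the free factor graph together with Osin's classification to upgrade ``the action is elementary'' (Theorem~\ref{thm:main}) to ``the action is elliptic'', i.e.\ to ``$\Gamma$ is virtually reducible''. By contrast, the one-ended case is essentially bookkeeping once Levitt's exact sequence is available, reducing transparently to Corollary~\ref{cor:mcg} and property~(T).
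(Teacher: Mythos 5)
Your overall architecture (induction on a JSJ/Grushko complexity, reduction to mapping class groups, to abelian twist groups, and to simpler outer automorphism groups) is the right one, and your one-ended case is essentially the paper's argument (Case~1 of Corollary~\ref{out-hyp}): Bowditch's canonical JSJ, Levitt's description of a finite-index subgroup of $\Out(G)$ as an extension of a product of mapping class groups by the abelian group of twists, then Corollary~\ref{cor:mcg} and property~(T). The gaps are in the free and freely decomposable cases, which you yourself flag as the obstacle. A first, repairable, issue: the action of $\Out(F_n)$ on the free factor graph is not known to be acylindrical, so the route through Osin's classification is unavailable; what does work is that an elliptic or parabolic action admits no loxodromic element, hence $\rho(\Gamma)$ contains no fully irreducible automorphism, and only then does the Handel--Mosher subgroup alternative produce a virtually invariant proper free factor.

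The more serious gap is your treatment of the Fouxe--Rabinovich kernel $K=\ker\bigl(\Out(G;\{[A_i]\})\to\prod_i\Out(A_i)\bigr)$. The claim that $K$ is, up to finite index, an iterated extension with kernels among finitely generated abelian groups, the factors $A_i$, and groups $\Out(F_m)$ is not an off-the-shelf citation, and establishing it is essentially equivalent to the hard part of the appendix. The paper instead proves Theorem~\ref{free-product-1} directly by a second induction: $\Out(G,\calf^{(t)})$ acts on the hyperbolic graph $FZ(G,\calf)$ of relative $\calz$-splittings, Theorem~\ref{thm:main} forces this action to be elementary, and Horbez's theorem then yields a dichotomy --- either $\rho(\Gamma)$ virtually fixes the conjugacy class of a proper relative free factor (handled by induction on the complexity $\xi(G,\calf)$), or it virtually fixes the homothety class of a very small $(G,\calf)$-tree $T$ with trivial arc stabilizers. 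Your proposal never confronts this second alternative, which is the crux and is precisely why the paper's introduction warns that bounded orbits in splitting complexes need not give finite orbits: one must kill the homothety character (abelian image, hence trivial by property~(T)), project to the outer automorphism groups of the point stabilizers of $T$ (induction again), and finally control $\mathrm{Stab}(T,\{G_v\}^{(t)})$, which by Guirardel--Levitt virtually injects into a product of groups $G_v^{d_v}/Z(G_v)$; this is where Lemma~\ref{point-stab}, i.e.\ Theorem~\ref{thm:main} applied to point stabilizers of very small trees, is needed. Until the very small tree alternative is handled, the freely decomposable inductive step is not closed.
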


\mk

We will now give the outline of the proof of Theorem~\ref{thm:main}, and explain the different parts of the article.

In Section~\ref{sec:induction}, we show how to use $L^1$ induction to obtain, starting from an action of a higher rank lattice $\Gamma < G$ on a hyperbolic space, an action of $G$ on a coarse median space $Y$. To that purpose, if $\Gamma$ is non-uniform, we use Shalom's work on integrability of cocyles (see~\cite{shalom}).

In Section~\ref{sec:buildings}, we show that any action of $G$ on a coarse median space $Y$ has sublinear orbit growth. To prove this, we embed an asymptotic cone of $G$, which is a non-discrete affine building, into the asymptotic cone of $Y$, which is a metric median space. We then use techniques similar to~\cite{haettel_coarse_median}, where we proved that higher rank affine buildings do not admit any Lipschitz median.

In Section~\ref{sec:random walks}, we use the Brownian motion of the symmetric space of $G$, or a standard random walk on the $1$-skeleton of the Bruhat-Tits building of $G$, and use Lyons-Sullivan's discretization procedure to show that some random walk on $\Gamma$ has zero drift in $X$. Finally, we use a result of Maher and Tiozzo (Theorem~\ref{thm:maher tiozzo}) to show that the action of $\Gamma$ on $X$ is elementary. Finally, we rule out the case of lineal actions using Burger and Monod's result that $\Gamma$ has no unbounded quasimorphisms.

In Section~\ref{sec:cor}, we give the proof of the corollaries.

\mk

We would like to thank Yves Benoist for suggesting to look at Lyons-Sullivan's discretization procedure of the Brownian motion, and for long and stimulating discussions. We would like to thank Jean Lécureux, Bruno Duchesne, Nicolas Monod, Mikael de la Salle, Koji Fujiwara, Anthony Genevois, Brian Bowditch and Mark Hagen for interesting discussions.

\section{Definitions}

We start by recalling the definitions of medians and coarse medians as defined by Bowditch in~\cite{bowditch_coarse_median} and their essential properties.

\bdf A \emph{median} on a set $X$ is a map $\mu:X^3 \ra X$ which satisfies the following:
\bit \item[(M1)] $\forall a,b,c \in X, \mu(a,b,c)=\mu(b,a,c)=\mu(b,c,a)$,
\item[(M2)] $\forall a,b \in X, \mu(a,a,b)=a$,
\item[(M3)] $\forall a,b,c,d,e \in X, \mu(a,b,\mu(c,d,e))=\mu(\mu(a,b,c),\mu(a,b,d),e)$.
\eit
The pair $(X,\mu)$ is also called a \emph{median algebra}.
\edf

Furthermore, there exist universal objects called free median algebras, for which we will simply state the following.

\bpro \label{pro:free median} For any $p \in \N$, there exists a finite \emph{free median algebra} $(X,\mu_X)$ such that, for any finite median algebra $(Y,\mu_Y)$ with $|Y| \leq p$, there exists a median surjective homomorphism $(X,\mu_X) \ra (Y,\mu_Y)$. \epro

Medians are interesting from the viewpoint of geometry thanks to the following notion.

\bdf Let $(X,d)$ be a metric space. The \emph{interval} between $a,b \in X$ is $[a,b]=\{c \in X \st d(a,c)+d(c,b)=d(a,b)\}$. The metric space $(X,d)$ is called \emph{metric median} if for every $a,b,c \in X$, the intersection $[a,b] \cap [b,c] \cap [c,a]$ is a single point $\mu(a,b,c)$. \edf

Note that if $(X,d)$ is metric median, the fonction $\mu:X^3 \ra X$ given in the definition is a median.

\bexes\
\bit \item $\R$, $(\R^n,\ell_1)$ or any $L^1$ space are metric median spaces.
\item Products of metric median spaces, endowed with the $\ell_1$ product distance, are metric median.
\item $\{0,1\}$, and the $n$-cube $\{0,1\}^n$, are metric median, with the combinatorial distance.
\item According to~\cite{chepoi_median_cat0}, any simplicial graph is metric median if and only if it is the $1$-skeleton of a CAT(0) cube complex.
\item Any $\R$-tree is a metric median space.
\eit
\eexes

\bdf
Let $(X,\mu)$ be a median algebra. The \emph{rank} of $(X,\mu)$ is the supremum of integers $n \in \N$ such that there exists a median embedding of the $n$-cube $\{0,1\}^n \ra X$ into $X$. 
\edf

\bdf
Let $(X,\mu)$ be a median algebra. A subset $A$ of $X$ is called \emph{convex} if for every $a,b \in A$, we have $[a,b] \subset X$.
\edf

\bpro
Let $(X,\mu)$ be a median algebra. For any two distinct points $x,y \in X$, there exists a \emph{wall} $W=\{H^+,H^-\}$ separating $x$ and $y$, i.e. $X=H^+ \sqcup H^-$ is a partition of $X$ into two convex subsets $H^+$, $H^-$, such that $x$ and $y$ do not belong the same $H^\pm$.
\epro

In~\cite{bowditch_coarse_median}, Bowditch defined the notion of a coarse median space, in order to encompass notably hyperbolic spaces, CAT(0) cube complexes and mapping class groups. This is a natural generalization of the definition of Gromov-hyperbolic spaces using comparisons with finite metric trees. Roughly speaking, coarse median spaces have good uniform approximations by finite CAT(0) cube complexes.

\bdf
Let $(X,d)$ be a metric space. A \emph{coarse median} on $X$ is map $\mu : X^3 \ra X$ which satisfies $(M1)$, $(M2)$ and the following:
\bit \item[(C1)] There are constant $k$,$h(0)$ such that for all $a,b,c,a',b',c' \in X^3$, we have
$$ d(\mu(a,b,c),\mu(a',b',c')) \leq k(d(a,a')+d(b,b')+d(c,c'))+h(0).$$
\item[(C2)] There is a function $h:\N \ra [0,\infty)$ with the following property. Suppose that $A \subset X$ is finite with $|A| \leq p$, then there exists a finite median algebra $(\Pi,\mu_\Pi)$ and maps $\pi:A \ra \Pi$ and $\eta : \Pi \ra X$ such that
\beq  &\forall x,y,z \in \Pi, d(\eta \mu_\Pi(x,y,z),\mu(\eta x,\eta y,\eta z)) \leq h(p) \\
&\forall a \in A, d(a,\eta \pi a) \leq h(p).\eeq
\eit
If furthermore the median algebra $\Pi$ can always be chosen to have a rank bounded by $r$, we say that $\mu$ is a coarse median of rank at most $r$. 
\edf

\bexes\
\bit 
\item Any median metric space is coarse median.
\item Any metric space quasi-isometric to a coarse median space is coarse median.
\item A metric space is Gromov-hyperbolic if and only if it is coarse median of rank $1$.
\item Any space hyperbolic relative to coarse median spaces is coarse median (see~\cite{bowditch_rh_coarse_median}).
\item For any closed surface $S$ possibly with punctures, the mapping class group of $S$ and the Teichmüller space of $S$ with either the Teichmüller or Weil-Peterson metric are coarse median (see~\cite{bowditch_coarse_median}).
\item Any hierarchically hyperbolic space is coarse median (see~\cite{hhg2}).
\item Higher rank lattices are not coarse median (see~\cite{haettel_coarse_median}).
\eit
\eexes

One of the main tools to study coarse median spaces are asymptotic cones. 

\bthm[Bowditch, see~\cite{bowditch_coarse_median}] Let $(X,d,\mu)$ be a coarse median space. Then on any asymptotic cone $(X_\infty,d_\infty)$, there is a canonically defined median $\mu_\infty:X_\infty^3 \ra X_\infty$, which is $d_\infty$-Lipschitz with respect to each variable. \ethm

\section{Induction of the action to the semisimple group}

\label{sec:induction}

For this section, we will use the following notations and assumptions.

\mk

$G$ is a locally compact group, compactly generated, and $\Gamma$ is a lattice in $G$. Fix a geodesic Gromov-hyperbolic space $(X,d_X)$, and consider an action of $\Gamma$ by isometries on $X$. Fix a basepoint $x_0 \in X$.

\mk

In this section, we will see how to produce an action of $G$ by isometries on a coarse median space using induction.

\mk

Let $\mu_X : X^3\ra X$ denote a coarse median on $X$. It is uniquely defined up to a distance bounded above by $\delta$, where $\delta \geq 0$ is a constant in the thin triangle definition of the Gromov-hyperbolicity of $X$. As a consequence, the action of $\Gamma$ on $X$ quasi-preserves $\mu_X$.

Since $\Gamma$ is a lattice in $G$, we can consider a measurable closed fundamental domain $U \subset G$ that contains a neighbourhood of $e$, such that $G=U\Gamma$. Let $\lambda$ denote the Haar probability measure on $G / \Gamma \simeq U$.

Let $Y=L^1(G/\Gamma,X)=\{a:U \ra X \mbox{ measurable } \st \int_U d_X(a(u),x_0) d\lambda(u) < +\infty\}$. Endow $Y$ with the $L^1$ distance, for $a,b \in Y$:
$$d_Y(a,b) = \int_U d_X(a(u),b(u)) d\lambda(u).$$
Define $\mu_Y:Y^3 \ra Y$ by $\mu_Y(a,b,c) : u \in U \mapsto \mu_X(a(u),b(u),c(u))$. 

\bpro
The space $(Y,d_Y,\mu_Y)$ is a coarse median space.
\epro

\bp
Let $k  \geq 0$ and $h:\N \ra [0,\pif)$ denote the constants in the definition of the coarse median $\mu_X$ on $X$. For any $a,b,c,a',b',c' \in Y$, we have
\beq d_Y(\mu_Y(a,b,c),\mu_Y(a',b',c')) = \int_U d_X(\mu_X(a(u),b(u),c(u)),\mu_X(a'(u),b'(u),c'(u))) d\lambda(u)& \\ \leq \int_U \left(k(d_X(a(u),a'(u))+d_X(b(u),b'(u))+d_X(c(u),c'(u)))+h(0)\right)d\lambda(u)&\\
\leq k(d_Y(a,a')+d_Y(b,b')+d_Y(c,c'))+h(0),&\eeq
so $\mu_Y$ satisfies the condition (C1).

Let $A \subset Y$ be a finite subset with $|A| \leq p$. For each $u \in U$, consider the finite subset $A(u) \subset X$: there exists a finite median algebra $(\Pi(u),\mu_{\Pi(u)})$ and maps $\pi(u):A(u) \ra \Pi(u)$, $\eta(u) : \Pi(u) \ra X$, such that for every $u \in U$, we have
\beq  &\forall x,y,z \in \Pi(u), d_X(\eta(u) \mu_{\Pi(u)}(x,y,z),\mu_X(\eta(u) x,\eta(u) y,\eta(u) z)) \leq h(p) \\
&\forall a \in A(u), d_X(a,\eta(u) \pi(u) a) \leq h(p).\eeq

Without loss of generality, one cas assume that, for every $u \in U$, the median algebra $(\Pi(u),\mu_{\Pi(u)})$ is a free median algebra over $p$ generators from Proposition~\ref{pro:free median}, which we denote simply $(\Pi,\mu_\Pi)$. Furthermore, we can assume that each of the maps $\pi(u):A \ra \Pi$ is constant, equal to some $\pi_Y:A \ra \Pi$.

Up to a uniformly bounded error $K \geq 0$, one may assume that, for each $x \in \Pi$, the map $u \in U \mapsto \eta(u)x \in X$ is measurable. Let us define the map $\eta_Y : \Pi \ra Y$ which to $x \in \Pi$ maps $\eta_Y(x) \in Y$ defined by $\eta_Y(x)(u) = \eta(u)(x)$.

For every $x,y,z \in \Pi$, we then have 
\beq &d_Y(\eta_Y \mu_{\Pi}(x,y,z),\mu_Y(\eta_Y(x),\eta_Y(y),\eta_Y(z))) = \\
&\int_U d_X\left(\eta(u) \mu_\Pi(x,y,z),\mu_X(\eta(u)(x),\eta(u)(y),\eta(u)(z))\right) d\lambda(u) \leq h(p).\eeq

Furthermore, for every $a \in A$, we have
$$ d_Y(a,\eta_Y \pi_Y a) = \int_U d_X\left(a(u),\eta(u)(\pi_Y(a))\right) d\lambda(u) \leq h(p),$$
so $\mu_Y$ satisfies the condition (C2).

As $\mu_Y$ also satisfies the conditions (M1) and (M2), this proves that $(Y,d_Y,\mu_Y)$ is a coarse median space (of infinite rank in general).\ep

\bpro
Any asymptotic cone of $(Y,d_Y,\mu_Y)$ is a metric median space.
\epro

\bp

Fix $\omega$ a non-principal ultrafilter on $\N$, fix $(y_n)_{n \in \N}$ a sequence of basepoints in $Y$, and fix a sequence $(\lambda_n)_{n \in \N}$ of scaling parameters going to $+\infty$. Consider the asymptotic cone $(Y_\infty,d_{Y,\infty},y_\infty,\mu_{Y,\infty})=\liml_\omega (Y,\f{1}{\lambda_n}d_Y,y_n,\mu_Y)$. Then according to~\cite{bowditch_coarse_median}, $\mu_{Y,\infty}$ is a Lipschitz median on $(Y_\infty,d_{Y,\infty})$. We will show that the metric $d_{Y,\infty}$ is actually a median metric, and the associated median is $\mu_{Y,\infty}$.

\mk

We will first show that $\mu_{Y,\infty}$-intervals are included in $d_{Y,\infty}$-intervals in $Y_\infty$. More precisely, fix $a_\infty=(a_n)_{n \in \N},b_\infty=(b_n)_{n \in \N}, c_\infty=(c_n)_{n \in \N}$ in $Y_\infty$ such that $\mu_{Y,\infty}(a_\infty,b_\infty,c_\infty)=b_\infty$. We will show that $d_{Y,\infty}(a_\infty,b_\infty)+d_{Y,\infty}(b_\infty,c_\infty)=d_{Y,\infty}(a_\infty,c_\infty)$. For each $n \in \N$, let $m_n=\mu_Y(a_n,b_n,c_n) \in Y$. By assumption, we have $\liml_{\omega} \f{d_Y(m_n,b_n)}{\lambda_n}=0$.

Since $m_n=\mu_Y(a_n,b_n,c_n)$, we know that for almost every $u \in U$, we have $m_n(u)=\mu_X(a_n(u),b_n(u),c_n(u))$. Since $X$ is Gromov-hyperbolic with constant $\delta \geq 0$, we know that $d_X(a_n(u),c_n(u)) \geq d_X(a_n(u),b_n(u))+d_X(b_n(u),c_n(u))-\delta$. By integrating over $U$, we obtain $d_Y(a_n,c_n) \geq d_Y(a_n,b_n)+d_Y(b_n,c_n)-\delta$. Passing to the ultralimit, we have $d_{Y,\infty}(a_\infty,c_\infty) \geq d_{Y,\infty}(a_\infty,b_\infty)+d_{Y,\infty}(b_\infty,c_\infty)$, since the sequences $(m_n)_{n \in \N}$ and $(b_n)_{n \in \N}$ define the same point in $Y_\infty$.

\mk

Conversely, we will show that $d_{Y,\infty}$-intervals are included in $\mu_{Y,\infty}$-intervals in $Y_\infty$. More precisely, fix $a_\infty=(a_n)_{n \in \N},b_\infty=(b_n)_{n \in \N}, c_\infty=(c_n)_{n \in \N}$ in $Y_\infty$ such that $d_{Y,\infty}(a_\infty,b_\infty)+d_{Y,\infty}(b_\infty,c_\infty)=d_{Y,\infty}(a_\infty,c_\infty)$. We will show that $\mu_{Y,\infty}(a_\infty,b_\infty,c_\infty)=b_\infty$. 

For each $n \in \N$, let $m_n=\mu_Y(a_n,b_n,c_n) \in Y$. So for almost every $u \in U$, we have $m_n(u)=\mu_X(a_n(u),b_n(u),c_n(u))$. Since $X$ is Gromov-hyperbolic, there exists a constant $\delta'\geq 0$ such that $d_X(m_n(u),b_n(u)) \leq d_X(a_n(u),b_n(u))+d_X(b_n(u),c_n(u))-d_X(a_n(u),c_n(u))+\delta'$.
By integrating over $U$, we obtain $d_Y(m_n,b_n) \leq d_Y(a_n,b_n)+d_Y(b_n,c_n)-d_Y(a_n,c_n)+\delta'$. Passing to the ultralimit, we have $d_{Y,\infty}(m_\infty,b_\infty) \leq d_{Y,\infty}(a_\infty,b_\infty)+d_{Y,\infty}(b_\infty,c_\infty)-d_{Y,\infty}(a_\infty,c_\infty)=0$. As a consequence, we obtain that $\mu_{Y,\infty}(a_\infty,b_\infty,c_\infty)=m_\infty=b_\infty$.

\mk

As a consequence, the asymptotic cone $(Y_\infty,d_{Y,\infty},y_\infty,\mu_{Y,\infty})$ is a metric median space.
\ep

Let us denote the projection map $P: G \ra G / \Gamma \simeq U$, and $\chi : G \ra \Gamma$ the map such that $\forall g \in G, g=\pi(g) \chi(g)$. This enables us to define the following map :
\beq \pi:G \times Y & \ra & Y \\
(g,a) & \mapsto & \left(g \cdot a : u \in U \mapsto \chi(g^{-1}u)^{-1} \cdot a(P(g^{-1}u))\right).\eeq
It is simply the natural $G$-action by left multiplication on the induced representation on  $Y=L^1(G/\Gamma,X)$. To see that the map $\pi$ is well-defined, we need the following integrability condition, where $d_\Gamma$ denote the word length of $\Gamma$ with respect to some finite generating set $S$:
\begin{equation} \forall g \in G, \int_U d_\Gamma(\chi(g^{-1}u),e) d\lambda(u) < \infty. \label{eq:integrability}\end{equation}

\mk

It should be noted that when $\Gamma$ is a uniform lattice in $G$, the integrability condition~(\ref{eq:integrability}) is satisfied. When $\Gamma$ is non-uniform and is as in Theorem~\ref{thm:main}, according to the Margulis arithmeticity theorem, $\Gamma$ is an arithmetic lattice. Hence, according to Shalom (see~\cite{shalom}), for every $g \in G$, the cocycle $\chi:u \in U \mapsto \chi(g^{-1}u) \in \Gamma$ is in $L^2(U,\lambda)$, so the integrability condition~(\ref{eq:integrability}) is satisfied.

\bpro
The map $\pi:G \times Y \ra Y$ is an action of $G$ on $Y$, by isometries, quasi-preserving $\mu_Y$.
\epro

\bp We will first show that $\pi$ is well-defined, using the integrability condition~(\ref{eq:integrability}). Let $M=\max_{\gamma \in S} d_X(\gamma \cdot x_0,x_0) \geq 0$: we have $\forall \gamma \in \gamma, d_X(\gamma \cdot x_0,x_0) \leq Md_\Gamma(\gamma,e)$. As a consequence, for every $g \in G$ and $a \in Y$, we have $\forall u \in U, d_X(\chi(g^{-1}u)^{-1} \cdot a(P(g^{-1}u)), x_0) \leq d_X(a(P(g^{-1}u)), x_0) + Md_\Gamma(\chi(g^{-1}u),e)$ so
\beq &&\int_U d_X(\chi(g^{-1}u)^{-1} \cdot a(P(g^{-1}u)), x_0) d\lambda(u) \\
&&\leq \int_U d_X(a(P(g^{-1}u)), x_0) d\lambda(u) + M\int_U d_\Gamma(\chi(g^{-1}u),e) d\lambda(u)< \infty,\eeq
since $a \in Y$ and by the integrability condition~(\ref{eq:integrability}). As a consequence, $\pi$ is well-defined.

\mk

We will now show that $\pi$ is an action. Let $g,h \in G$, $a \in Y$ and $u \in U$. Then
\beq \pi(g,\pi(h,a))(u) &=& \chi(g^{-1}u)^{-1} \cdot \pi(h,a)(P(g^{-1}u)) \\
&=& \chi(g^{-1}u)^{-1} \chi(h^{-1}P(g^{-1}u))^{-1} \cdot a(P(h^{-1}P(g^{-1}u))).\eeq
Notice that $\chi(h^{-1}P(g^{-1}u))\chi(g^{-1}u)=\chi(h^{-1}g^{-1}u)$ and $P(h^{-1}P(g^{-1}u))=P(h^{-1}g^{-1}u)$, so that 
$$\pi(g,\pi(h,a))(u) = \chi((gh)^{-1}u)^{-1} \cdot a(P((gh)^{-1}u)) = \pi(gh,a)(u).$$
As a consequence, $\pi$ is an action, and we will simply denote it $\pi(g,a)=g \cdot a$ for simplicity.

\mk

We will now show that $\pi$ is an action by isometries: let $g \in G$ and $a,b \in Y$. Then
\beq d_Y(g \cdot a,g \cdot b) &=& \int_U d_X(\chi(g^{-1}u)^{-1} \cdot a(P(g^{-1}u)),\chi(g^{-1}u)^{-1} \cdot b(P(g^{-1}u))) d\lambda(u) \\
&=& \int_U d_X(a(P(g^{-1}u)),b(P(g^{-1}u))) d\lambda(u)\\
&=& \int_U d_X(a(v),b(v)) d\lambda(v)\\
&=& d_Y(a,b),\eeq
since $u \mapsto P(g^{-1}u)$ is a measurable bijection from $U$ to $U$ which preserves the Haar measure $\lambda$.

\mk

We will show that this action quasi-preserves the coarse median $\mu_Y$. Let $C\geq 0$ such that $\forall \gamma \in \Gamma, \forall x,y,z \in X, d_X(\mu_X(\gamma \cdot x,\gamma \cdot y,\gamma \cdot z),\gamma \cdot \mu_X(x,y,z)) \leq C$. Then for any $g \in G$ and any $a,b,c \in Y$, we have
\beq &&d_Y(\mu_Y(g \cdot a,g \cdot b,g \cdot c),g \cdot \mu_Y(a,b,c)) =\\
&&\int_U d_X\left[\mu_X(\chi(g^{-1}u)^{-1} \cdot a(P(g^{-1}u)),\chi(g^{-1}u)^{-1} \cdot b(P(g^{-1}u)),\chi(g^{-1}u)^{-1} \cdot c(P(g^{-1}u))),\right.\\
&&\left.\chi(g^{-1}u)^{-1} \cdot \mu_X(a(P(g^{-1}u)),b(P(g^{-1}u)),c(P(g^{-1}u)))\right] d\lambda(u) \leq C.\eeq
\ep

Let $d_G$ denote any word quasi-metric on $G$ defined by a compact neighbourhood of the identity $B$ in $G$ which spans $G$, or any metric quasi-isometric to it.

\blem \label{lem:orbit coarse lipschitz}
The orbit map $g \in G \mapsto g \cdot y_0 \in Y$ is coarsely Lipschitz (with respect to $d_G$ and $d_Y$), i.e. there exist constants $K,C \geq 0$ such that
$$ \forall g,h \in G, d_Y(g \cdot y_0,h \cdot y_0) \leq Kd_G(g,h)+C.$$ 
\elem

\bp
Since the statement is independent of the quasi-isometry class of $d_G$, we will consider the word quasi-metric defined by $B$.

\bit
\item If $\Gamma$ is a uniform lattice, the fundamental domain $U$ can be chosen to be relatively compact, so $B^{-1}U$ is relatively compact in $G$. As a consequence, there exists a finite set $S \subset \Gamma$ such that $B^{-1}U \subset US$. Let $\alpha = \max_{\gamma \in S} d_X(\gamma \cdot x_0,x_0) \geq 0$. 
\item If $\Gamma$ is a non-uniform lattice, according to~\cite{shalom}, since $B$ is relatively compact, there exists $\beta>0$ such that
$$ \forall g \in B, \int_U d_\Gamma(\chi(g^{-1}u),e) d\lambda(u) \leq \beta.$$
There exists a constant $C>0$ such that $\forall \gamma \in \Gamma, d_X(\gamma \cdot x_0,x_0) \leq Cd_\Gamma(\gamma,e)$. As a consequence, we have 
$$ \forall g \in B, d_Y(g \cdot y_0,y_0) \leq C \int_U d_\Gamma(\chi(g^{-1}u),e) d\lambda(u) \leq C \beta.$$
Let $\alpha = C\beta$.
\eit

In either case, we have $\forall g \in B, d_Y(g \cdot y_0,y_0) \leq \alpha$. We can conclude that the map $g \in G \mapsto g \cdot y_0 \in Y$ is $\alpha$-Lipschitz.
\ep

\section{Actions of higher rank semisimple groups on coarse median spaces}

\label{sec:buildings}

For this section, we will use the following notations and assumptions.

$G$ is any finite product of higher rank almost simple connected algebraic groups with finite center over local field.

Let $K$ be a maximal compact subgroup of $G$, and consider left $G$-invariant, right $K$-invariant distance $d_G$ on $G$.

Fix a coarse median space $(X,d,\mu)$, and assume that $G$ acts by isometries on $X$, quasi-preserving $\mu$. Fix a basepoint $x_0 \in X$, and assume that the orbit map $g \in G \mapsto g \cdot x_0$ is coarsely Lipschitz. Assume furthermore that $(X,d,\mu)$ has metric median asymptotic cones.

The purpose of this section is to prove the following theorem. 

\bthm \label{thm:sublinear} $G$ has sublinear orbit growth, i.e.
$$ \liml_{R \ra \pif} \sup_{g \in G, d_G(e,g) \leq R} \f{d_X(g \cdot x_0,x_0)}{R} = 0.$$
\ethm

One can restrict to the case where $G$ is almost simple non-compact to prove Theorem~\ref{thm:sublinear}: we will now restrict to that case. Then $G$ is an almost simple non-compact algebraic group over a local field $\K$ of higher rank.

\subsection{Non-existence of loxodromic elements}

Recall that an isometry of a metric space $X$ is called loxodromic if there exists (equivalently, for every) $x \in X$ such that $\limlinf_{n \ra \pif} \f{d(g^n \cdot x,x)}{n} >0$.

Will show that Theorem~\ref{thm:sublinear} is a consequence of the following.

\bthm \label{thm:no loxodromic} No $\K$-semisimple element of $G$ acts loxodromically on $X$.
\ethm

\bp[Theorem~\ref{thm:no loxodromic} implies Theorem~\ref{thm:sublinear}]
By contraposition, let us assume that $G$ has linear orbit growth, so there exists an unbounded sequence $(g_n)_{n \in \N}$ in $G$ such that $\liml_{n \ra \pif} \f{d_X(g_n \cdot x_0,x_0)}{d_G(e,g_n)}=L>0$.

Consider a Cartan decomposition $G=KAK$, where $A$ is a maximal $\K$-split torus of $G$. Fix $a_1,\dots,a_r \in A$ that span a cocompact $\Z^r$ subgroup of $A$.

Since $K$ is compact and the action is coarsely Lipschitz, we may assume that $\forall n \in \N, g_n \in A$. There exist integers $d_{1,n},\dots,d_{r,n} \in \N$ such that $d_G(g_n,a_1^{d_{1,n}}\dots a_r^{d_{r,n}})$ is bounded with respect to $n \in \N$.

We know that there exists $1 \leq i \leq r$ such that $\liml_{n \ra \pif} \f{d_X(a_i^{d_{i,n}} \cdot x_0,x_0)}{d_G(e,g_n)}>0$. Since $d_G(g_n,e)$ is coarsely equivalent to $\max(d_G(a_i^{d_{i,n}},e), 1 \leq i \leq r)$, we have $\liml_{n \ra \pif} \f{d_X(a_i^{d_{i,n}} \cdot x_0,x_0)}{d_G(e,a_i^{d_{i,n}})}>0$. This proves that the $\K$-semisimple element $a_i$ acts loxodromically on $X$.
\ep

\subsection{Reduction to rank $2$}

We show that we can reduce our study to the case where $G$ is almost simple with $\K$-rank $2$.

\bpro \label{pro:rank 2} Assume that some $\K$-semisimple element of $G$ acts loxodromically on $X$. Consider a simple subgroup $H$ of $G$ defined over $\K$ of $\K$-rank $2$. Then some $\K$-semisimple element of $H$ acts loxodromically on $X$. \epro

\bp Let $A$ be a maximal $\K$-split torus of $G$, which contains a maximal $\K$-split torus $A'$ of $H$. Up to conjugation, we may assume some element $g_0 \in A$ acts loxodromically on $X$. Since $G$ is simple, there exists a finite number of elements $w_1,\dots,w_n$ in the (spherical) Weyl group of $G$ such that $A=\prod_{i=1}^n w_iA'w_i^{-1}$. Consider $h_1,\dots,h_n \in A'$ such that $g_0=\prod_{i=1}^n w_ih_iw_i^{-1}$. Since for every $1 \leq i \leq n$, the elements $w_ih_iw_i^{-1}$, for $1 \leq i \leq n$ pairwise commute, we know that for at least one $1 \leq i \leq n$, the element $h_i \in H$ acts loxodromically on $X$. 
\ep

In conclusion, in order to prove Theorem~\ref{thm:no loxodromic} we will reduce to the case where $G$ is almost simple with $\K$-rank $2$.

\subsection{Passing to asymptotic cones}

We show how to define a natural map from an asymptotic cone of $G$ to an asymptotic cone of the coarse median space $X$.

Fix a non-principal ultrafilter $\omega$ on $\N$. Define $(X_\infty,x_\infty,d_\infty,\mu_\infty)$ to be the $\omega$-ultralimit of $(X,x_0,\frac{1}{n}d,\mu)$: according to\cite{bowditch_coarse_median}, $\mu_\infty$ is a Lipschitz median on $(X_\infty,d_\infty)$. By assumption, $(X_\infty,d_\infty,\mu_\infty)$ is a metric median space.

Define $(G_\infty,e_\infty,d_{G_\infty})$ to be the $\omega$-ultralimit of $(G,e,\frac{1}{n}d_G)$. According to \cite{kleiner_leeb}, $(G_\infty,e_\infty,d_{G_\infty})$ is a non-discrete affine building.

\blem \label{lem:orbit map} The map $\phi:G_\infty \ra X_\infty : [g_n] \mapsto [g_n \cdot x_0]$ is well-defined and Lipschitz. \elem

\bp If $[g_n]=[g'_n]$, then by definition $\liml_\omega \frac{d_G(g_n,g'_n)}{n} =0$. Let $K,C \geq 0$ denote the constants for the definition of the orbit map $g \in G \mapsto g \cdot x_0$ being coarsely Lipschitz. Since $d(g_n \cdot x_0,g'_n \cdot x_0) \leq Kd_G(g_n,g'_n)+C$, we deduce that $\liml_\omega \frac{d(g_n \cdot x_0,g'_n \cdot x_0)}{n}=0$, hence $\phi$ is well-defined.

Furthermore, if $[g_n],[g'_n] \in G_\infty$, then since $d(g_n \cdot x_0,g'_n \cdot x_0) \leq Kd_G(g_n,g'_n)+C$, we deduce that $d_\infty([g_n \cdot x_0],[g'_n \cdot x_0]) \leq K d_{G_\infty}([g_n],[g'_n])$, so $\phi$ is $K$-Lipschitz.\ep

In view of a proof of Theorem~\ref{thm:no loxodromic} by contradiction, we will show a consequence of the existence of some loxodromic element.

\blem \label{lem:geodesic image of flats} Assume that some $\K$-semisimple element $g_0 \in G$ acts loxodromically on $X$. Then for any $\K$-semisimple element $g \in G$, the image of $([g^{\lfloor tn \rfloor}])_{t \in \R}$ under $\phi$ is a constant speed geodesic in $X_\infty$. \elem

\bp
Up to conjugation, we may assume that $g_0$ and $g$ belong to the same $\K$-split torus $A$. Since $G$ does not have relative type $A_1^2$, there exist conjugates (by Weyl group elements) $g_1$, $g_2$ of $g_0$ inside $A$ such that $\<g_0,g_1\>$, $\<g_0,g_2\>$ and $\<g_1,g_2\>$ are all cocompact ($\Z^2$) subgroups of $A$.

There exist $x,y \in \R$ such that $g=g_0^xg_1^y$. Up to using possibly $g_2$ instead of $g_0$ or $g_1$, we may assume that $|x| \neq |y|$, for instance $|x| > |y|$.

Let $L_0 = \liml_{n \ra \pif} \f{d(g_0^n \cdot x_0,x_0)}{n}>0$ by assumption. Since $g_1$ is a conjugate of $g_0$, we also have $\liml_{n \ra \pif} \f{d(g_1^n \cdot x_0,x_0)}{n}=L_0$. Then
\beq \liml_{n \ra \pif} \f{d(g^n \cdot x_0,x_0)}{n} & \geq & \f{d(g_0^{xn} \cdot x_0,x_0)}{n}-\f{d(g_1^{yn} \cdot x_0,x_0)}{n}\\
& \geq & L_0|x|-L_0|y|.\eeq
Let $L=\liml_{n \ra \pif} \f{d(g^n \cdot x_0,x_0)}{n} \geq L_0(|x|-|y|)>0$. Fix $t,s \in \R$. Then  \beq d_\infty([g^{\lfloor tn \rfloor}],[g^{\lfloor sn \rfloor}]) &=& \liml_\omega \f{d(g^{\lfloor tn \rfloor} \cdot x_0,g^{\lfloor sn \rfloor} \cdot x_0)}{n} \\
&=& \liml_\omega \f{d(g^{\lfloor (t-s)n \rfloor} \cdot x_0,x_0)}{n} = L|t-s|.\eeq
\ep

\subsection{Embedding of buildings into median spaces}

We will now prove a rigidity result for Lipschitz embeddings of affine buildings into metric median spaces. It could be stated in a more general form, but for simplicity we only state it in the way we will use it in the proof of Theorem~\ref{thm:no loxodromic}.

\bpro \label{pro:no embedding} There exists a $\K$-semisimple element $g \in G$ such that the image of $([g^{\lfloor tn \rfloor}])_{t \in \R}$ under $\phi$ is not a constant speed geodesic in $X_\infty$. \epro

\bp By contradiction, assume that for every $\K$-semisimple element $g \in G$, the image of $([g^{\lfloor tn \rfloor}])_{t \in \R}$ under $\phi$ is a constant speed geodesic in $X_\infty$.

Consider a maximal $\K$-split torus $A$ of $G$, and consider some affine line $L$ in the asymptotic cone $A_\infty \simeq \R^2$: we will show that $\phi(L)$ is a constant speed geodesic in $X_\infty$. The line $L$ could be parametrized as $([h^{\lfloor sn \rfloor}g^{\lfloor tn \rfloor}])_{t \in \R}$, where $g,h \in A$, $g \neq e$ and $s \in \R$. Informally speaking, it is the line through the point $h^{s\infty} \in A_\infty$, with direction given by $g^\infty \in A_\infty$. By assumption, the image of $([g^{\lfloor tn \rfloor}])_{t \in \R}$ under $\phi$ is a constant speed geodesic in $X_\infty$, so it is also the case for its translate, the image of $L$ under $\phi$.

\mk

Fix an apartment $F$ of $G_\infty$ which is the asymptotic cone of some maximal $\K$-split torus of $G$. Consider a wall $\{H^+,H^-\}$ in $X_\infty$ which separates some points in $\phi(F)$. According to the previous part, affine segments in $\phi(F) \simeq \R^2$ are geodesic. Since $H^\pm$ is convex in $X_\infty$, we deduce that $H^\pm \cap \phi(F)$ is affinely convex in $\phi(F)$.

The partition of $\phi(F) \simeq \R^2$ into two non-empty affinely convex subsets $H^\pm \cap \phi(F)$ determines an affine line $\phi(L)=\ov{H^+ \cap \phi(F)} \cap \ov{H^- \cap \phi(F)} \subset \phi(F)$ such that each connected component of $\phi(F \bs L)$ is contained in $H^+ \cap \phi(F)$ or in $H^- \cap \phi(F)$.

Since $G_\infty$ has spherical type different from $A_1^2$, there exists a singular line $L'$ in $F$ containing the basepoint $(1)_{n \in \N} \in F$ which intersects $L$ and is not orthogonal to $L$ (and $L' \neq L$). As a consequence, there exist two apartments $F_1,F_2$ in $G_\infty$, which are asymptotic cones of maximal $\K$-split tori of $G$, such that $F_1 \cap F$ and $F_2 \cap F$ are the two half-apartments of determined by $L'$, and furthermore such that $F_1 \cap F_2$ is a half-apartment $E$ bounded by $L'$. See Figure~\ref{fig:three}.

\begin{figure}[!h]
\def\svgwidth{8cm}
\center
\input{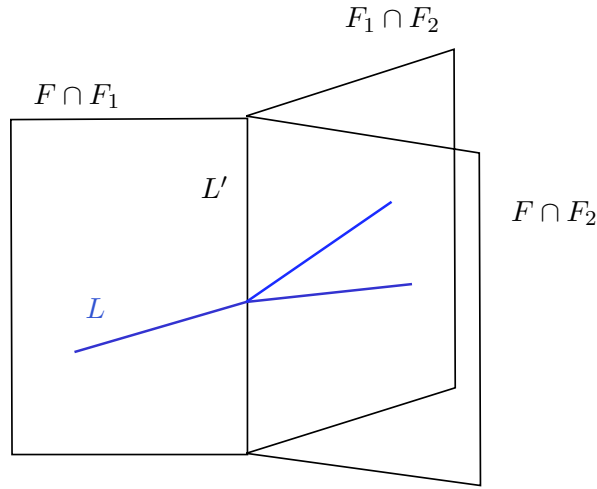}
\caption{The three half-flats}
\label{fig:three}
\end{figure}

Fix $i \in \{1,2\}$. Since the wall $\{H^+,H^-\}$ separates some points in $\phi(F_i)$, we can consider the affine line $\phi(L_i)=\ov{H^+ \cap \phi(F_i)} \cap \ov{H^- \cap \phi(F_i)} \subset F_i$. By uniqueness of $L$ and $L_i$, we have $L_i \cap F = L \cap F_i$. So $L_i$ is the affine line in $F_i$ containing the half-line $L \cap F_i$.

Since the wall $\{H^+,H^-\}$ separates some points in the half-plane $\phi(E)$, then $\ov{H^+ \cap \phi(E)} \cap \ov{H^- \cap \phi(E)}$ is a unique half-line, so $E \cap L_1 = E \cap L_2$. This contradicts the fact that $L$ and $L'$ are not orthogonal. This concludes the proof by contradiction.
\ep

We can now conclude the proof of Theorem~\ref{thm:no loxodromic}. According to Lemma~\ref{lem:geodesic image of flats} and Proposition~\ref{pro:no embedding} we deduce that no $\K$-semisimple element of $G$ acts loxodromically on $X$. According to Proposition~\ref{pro:rank 2}, this finishes the proof of Theorem~\ref{thm:no loxodromic} in the general case.

\section{Random walks on lattices}

\label{sec:random walks}

\subsection{Reduction to a random walk on the lattice}

We will now use the same notations as in Section~\ref{sec:induction}. We will use Theorem~\ref{thm:sublinear} to deduce the following.

\bpro \label{pro:zero drift}
Under the assumptions of Theorem~\ref{thm:main}, there exists a probability measure $\nu$ on $\Gamma$ (with infinite support, generating $\Gamma$), such that the associated random walk $(\gamma_n \cdot x_0)_{n \in \N}$ on $X$ has zero drift:
$$\liml_{n \ra \pif} \f{\E\left[d_X(\gamma_n \cdot x_0,x_0)\right]}{n} = 0.$$
\epro

\bp 

Consider a random variable $h$ on the fundamental domain $U \subset G$, following the Haar probability mesaure $\lambda$. The first objective is to build a family of random variables $(g_t)_{t \geq 0}$ in $G$, independent from $h$, a sequence of random stopping times $(N_k)_{k \geq 1}$, and a symmetric random walk $(\gamma_k)_{k \geq 1}$ on $\Gamma$, such that :
\bit
\item There exists a constant $A \geq 0$ such that for each $k \geq 1$, we have $d_G(g_{N_k},h\gamma_k) \leq A$ almost surely.
\item There exist constants $B,B' \geq 0$ such that for each $T \geq 0$, we have $\E[\sup_{t \in [0,T]} d_G(e,g_t)] \leq BT+B'$.
\item There exist constants $C,C' \geq 0$ such that for each $k \geq 1$, we have $\E[N_k] \leq Ck+C'$.
\eit

We will now describe this construction according to whether $G$ is Archimedean or not.

\ben
\item Consider first the case where $G$ is semisimple real Lie group. Consider the symmetric space $M=G/K$ of $G$, where $K$ is a maximal compact subgroup of $G$, endowed with a $G$-invariant Riemannian metric $d_M$. Without loss of generality, we can assume that the stabilizer of $p_0$ in $\Gamma$ is $\{e\}$. Let $(p_t)_{t  \geq 0}$ denote a standard Brownian motion on $M$, starting from $p_0=[K]$, independent from $h$.  Then $(q_t=h^{-1} \cdot p_t)_{t \geq 0}$ is a standard Brownian motion on $M$, with initial law $\lambda^{-1} \cdot p_0$.

\mk

We will now apply Lyons-Sullivan's discretization procedure to the orbit $\Gamma \cdot p_0$ (see~\cite{lyons_sullivan}, \cite{ballmann_ledrappier}, \cite{kaimanovich}). For a small constant $R>0$, closed balls of radius $2R$ centered at $\Gamma \cdot p_0$ are disjoint. Furthermore, since $M$ has finite volume, the union $F=\bigcup_{\gamma \in \Gamma} \ov{B}(\gamma \cdot p_0,R)$ is recurrent, meaning that the probability that a random path $(q_t)_{t \geq 0}$ intersects $F$ is equal to $1$.

We will follow the description from~\cite{ballmann_ledrappier}. We define an open neighbourhood $V=\bigcup_{\gamma \in \Gamma} \mathring{B}(\gamma \cdot p_0,2R)$ of $F$. Ballmann and Ledrappier define random stopping times $(N_k)_{k \geq 1}$ such that for each $k \geq 1$, $q_{N_k} \in F$ almost surely. The first stopping time $N_1$ is the first entering time to $F$, and for each $k \geq 1$, $N_{k+1} \geq N_k$ is some reentering time to $F$ after having left $V$, but not necessarily the very next one. More precisely, $N_{k+1} \geq N_k$ is the $P^\text{th}$ reentering time to $F$ (after having left $V$), where $P \geq 1$ follows a geometric law with parameter $0<D<1$ (see~\cite{ballmann_ledrappier} for details).

For each $k \geq 1$, let $\gamma_k \in \Gamma$ be the random element such that $d_M(q_{N_k},\gamma_k \cdot p_0) \leq R$ almost surely. The main point of this whole construction is that $\gamma_k$ is the $k^\text{th}$ step of a random walk on $\Gamma$.

Furthermore, since $M$ has finite volume, the expectation of the $n^\text{th}$ reentering time to $F$ is bounded by $B_0n$, where $B_0 \geq 0$ is a constant, so that the expectation of $N_k$ is at most $kB_0D^2$ (see~\cite{ballmann_ledrappier} for details). In particular, there exist constants $C,C' \geq 0$ such that for each $k \geq 1$, we have $\E[N_k] \leq Ck+C'$. Since $M$ has sectional curvature bounded below, the expectation of $d_M(p_0,p_t)$ is at most $B_1t$, where $B_1 \geq 0$ is a constant. Hence for all $k \geq 1$ we have $\E[d_M(p_0,p_{N_k})] \leq kB_0B_1D^2$. 

For each $k \geq 1$, consider a random element $g_k \in G$ such that $g_k \cdot p_0=p_{N_k}$ almost surely. As a consequence, there exist constants $B,B' \geq 0$ such that $\E[d_G(e,g_k)] \leq Bk+B'$. Furthermore, since $d_M(q_{N_k},\gamma_k \cdot p_0) \leq R$ almost surely and $q_{N_k}=h^{-1}g_k \cdot p_0$ almost surely, there exists a constant $A \geq 0$ such that $d_G(g_k,h\gamma_k) \leq A$ almost surely.

\item We will now turn to the case where $G$ is semisimple algebraic group over a non-Archimedean local field. Let $B_G$ denote the Bruhat-Tits building of $G$, and fix a vertex $p_0$ of $B_G$. Since $\Gamma$ is residually finite, we can assume up to replacing $\Gamma$ by a finite index subgroup that the stabilizer of $p_0$ in $\Gamma$ is $\{e\}$.

If $G$ acts transitively on the vertices of its Bruhat-Tits building $B_G$, let $M$ denote the $1$-skeleton of $B_G$. Otherwise, consider the graph $M$ with vertex set $G \cdot p_0$, with an edge in $M$ between two vertices $p \neq p'$ if $p'$ is the closest vertex to $p$, among $G \cdot p_0 \bs \{p\}$, with respect to the combinatorial distance on the $1$-skeleton of $B_G$. Let $d_M$ denote the combinatorial distance on the graph $M$.

For the (left) action of $G$ on $M$ by simplicial automorphisms, the metric $d_M$ is $G$-invariant. Let $(p_t)_{t  \in \N}$ denote the simple random walk on $M$, with uniform probability transitions among all neighbours, starting from $p_0$. Then $(q_t=h^{-1} \cdot p_t)_{t \in \N}$ is a standard random walk on $M$, with initial law $\lambda^{-1} \cdot p_0$.

\mk

We will prove that the induced Markov chain on the (countable) quotient $\Gamma \bs M$ is positively recurrent. First note that if $\Gamma$ is a uniform lattice in $G$, then $\Gamma \bs M$ is a finite connected graph so the result follows. So we now consider the case where $\Gamma$ is a possibly non-uniform lattice in $G$.

Consider a finite set $S \subset G$, such that $s \in S \mapsto s \cdot p_0 \in M$ is a bijection onto the set of neighbours of $p_0$ in $M$. Let $\mu_S$ denote the uniform probability measure on $S$. Let $K$ denote the stabilizer of $p_0$ in $G$: it is a compact subgroup of $G$, let $\lambda_K$ denote its Haar probability measure.

Consider the probability measure $\mu=\lambda_K \mu_S \lambda_K$ on $G$. Then the random walk $(g_t)_{t \in \N}$ on $G$ starting from $e$ with transition law $\mu$ is such that $(g_t \cdot p_0)_{t \in \N}$ is a simple random walk on $M$. Without loss of generality, we can assume that the simple random walk $(p_t)_{t  \in \N}$ is obtained that way, so that $\forall t \in \N, g_t \cdot p_0=p_t$ almost surely.

Note that the Haar probability measure $\lambda$ on $\Gamma \bs G$ is stationary with respect to the right multiplication by $\mu$. As $\lambda$ is invariant under right multiplication by $K$, it defines a probability measure $\ov{\lambda}$ on $\Gamma \bs G / K \simeq \Gamma \bs M$, which is stationary with respect to the simple random walk.

Since $\Gamma \bs M$ is a countable and connected graph, the existence of the stationary probability measure $\ov{\lambda}$ ensures that the random walk $(p_t)_{t  \in \N}$ is recurrent, i.e. if $T=\inf\{t \geq 1 \st p_t=p_0\}$, we have $T < \infty$ almost surely. Furthermore the random walk is positively recurrent, i.e. we have $\E[T] = \f{1}{\ov{\lambda}(\{p_0\})} <\infty$.

\mk

Let $N_1=\inf\{t \geq 1 \st q_t = p_0\}$, and for each $k \geq 1$ let $N_{k+1}=\inf\{t \geq N_k+1 \st q_t=p_0\}$. For each $k \geq 1$, let $\gamma_k \in \Gamma$ denote the unique random element such that $q_{N_k}=\gamma_k \cdot p_0$ almost surely. Then $(\gamma_k)_{k \geq 1}$ is a symmetric random walk on $\Gamma$.

\mk

In particular, since $h^{-1}g_k \cdot p_0=q_{N_k}=\gamma_k \cdot p_0$ almost surely, there exists a constant $A \geq 0$ such that $\forall k \geq 1, d_G(g_k,h\gamma_k) \leq A$ almost surely. Furthermore, $\E[N_1]=C$ and $\E[N_2-N_1]=C'$ are finite since the random walk $(q_t)_{t \in \N}$ is positively recurrent, so for each $k \geq 1$, we have $\E[N_k] \leq Ck+C'$. And there exist constants $B,B' \geq 0$ such that $\E[d_G(e,g_{N_k})] \leq Bk+B'$ for every $k \geq 1$.
\een

\mk

\mk

We will now finish the proof in the general case.

\mk

There exists a finite set $S \subset \Gamma$ such that $B_G(e,A) \subset US$. Let $A'=\max_{s \in S} d_X(s \cdot x_0,x_0)$. Then, for every $k \geq 1$, there exists $s_k \in S$ such that $\chi(g_k^{-1}h)=s_k\gamma_k^{-1}$ almost surely. Hence $d_X(\chi(g_k^{-1}h)^{-1} \cdot x_0,x_0)=d_X(\gamma_k s_k^{-1} \cdot x_0,x_0) \geq d_X(\gamma_k \cdot x_0,x_0)-A'$ almost surely.

\mk

We will now consider the induced action of $G$ on the coarse median space $Y$, as in Section~\ref{sec:induction}. Note that the integrability condition~(\ref{eq:integrability}) is satisfied (using, in case $\Gamma$ is non-uniform, the Margulis arithmeticity theorem (see~\cite{margulis}) and Shalom's work~\cite{shalom}).

\mk

Let us compute, for $k \geq 1$, the expectation $E_k=\E\left[d_Y(g_k \cdot y_0,y_0)\right]$. Notice that since $\E[d_G(e,g_k)] \leq Bk+B'$, and since the action of $G$ on $Y$ is coarsely Lipschitz by Lemma~\ref{lem:orbit coarse lipschitz}, the expectation $E_k$ is finite. Furthermore
\beq E_k&=&\E\left[d_Y(g_k \cdot f_0,f_0)\right] \\
&=&\E\left[\int_U d_X(\chi(g_k^{-1}u)^{-1} \cdot x_0,x_0)d\lambda(u)\right]\\
&=&\E\left[ d_X(\chi(g_k^{-1}h)^{-1}\cdot x_0,x_0) \right]\\
&\geq &\E\left[ d_X(\gamma_k \cdot x_0,x_0) \right]-A'.\eeq

\mk

We will now prove that $(E_k)_{k \geq 1}$ is sublinear in $k$. Since the map $g \in G \mapsto g \cdot f_0 \in Y$ is coarsely Lipschitz by Lemma~\ref{lem:orbit coarse lipschitz}, we can apply Theorem~\ref{thm:sublinear}. As a consequence, we know that there exists a sublinear function $\phi:\R_+ \ra \R_+$ such that $\forall g \in G, d_Y(g \cdot f_0,f_0) \leq \phi(d_G(e,g))$. Up to replacing $\phi$ by its concave hull, we can assume that $\phi$ is concave and non-decreasing. Then we deduce that
$$\forall k \geq 1, E_k=\E[d_Y(g_k \cdot f_0,f_0)] \leq \phi(\E[d_G(e,g_k)]) \leq \phi(Bk+B').$$
In particular, $(E_k)_{k \geq 1}$ is sublinear in $k$.

\mk

In conclusion, since $\E\left[ d_X(\gamma_k \cdot x_0,x_0) \right] \leq A'+E_k$, we deduce that $(\E\left[ d_X(\gamma_k \cdot x_0,x_0) \right])_{k \geq 1}$ is sublinear in $k$. In particular, the random walk ($\gamma_k \cdot x_0)_{k \geq 1}$ on $X$ has zero drift.\ep

\subsection{Random walks on hyperbolic spaces}

We can now finish the proof of Theorem~\ref{thm:main}, using the following result of Maher and Tiozzo:

\bthm[Maher-Tiozzo \cite{maher_tiozzo}] \label{thm:maher tiozzo}
Let $\Gamma$ be a countable group of isometries of a separable Gromov hyperbolic space $X$, let $\nu$ be a non-elementary probability distribution on $\Gamma$, and let $x_0 \in X$ a basepoint. Then a random walk $(\gamma_n)_{n \in \N}$ on $\Gamma$ with step law $\nu$ has positive drift, i.e.
$$\liml_{n \ra \pif} \f{\E\left[d_X(\gamma_n \cdot x_0,x_0)\right]}{n} >0.$$
\ethm

With the notations of Theorem~\ref{thm:main}, assume that $\Gamma$ acts by isometries of a Gromov-hyperbolic space $X$. Up to passing to the injective hull of of $X$ (see~\cite{lang}), we can assume that $X$ is geodesic. Up to passing to a convex subset of $X$ containing some orbit of $\Gamma$, we can assume that $X$ is also separable. 

Then according to Proposition~\ref{pro:zero drift}, there exists a probability measure $\nu$ on $\Gamma$ with support generating $\Gamma$, such that the associated random walk on $X$ has zero drift. According to Theorem~\ref{thm:maher tiozzo}, this implies that the action of $\Gamma$ on $X$ is elementary.

If the action of $\Gamma$ on $X$ was lineal, then it would give an unbounded quasimorphism from $\Gamma$ to $\R$. According to Burger and Monod (see~\cite{burger_monod}), any quasi-morphism from $\Gamma$ to $\R$ bounded.

As a consequence, the action of $\Gamma$ on $X$ is elliptic or parabolic. This concludes the proof.

\section{Proof of corollaries}

\label{sec:cor}

We start by recalling the definition of a quasi-action, as in~\cite{manning}.

\bdf[Quasi action]
Let $(X,d)$ be a metric space, and let $(G,d_G)$ be a group endowed with a left invariant distance. A map $G \times X \ra X : (g,x) \mapsto g \cdot x$ is called a \emph{quasi-action} if there exist constants $(K,C)$ such that the following hold
\ben
\item For each $g \in G$, the map $X \ra X : x \mapsto g \cdot x$ is a $(K,C)$-quasi-isometry.
\item For each $x \in G$, the map $G \ra X : g \mapsto g \cdot x$ is coarsely $(K,C)$-Lipschitz.
\item For each $x \in X$ and $g,h \in G$, we have $d(g\cdot (h \cdot x),(gh)\cdot x) \leq C$.
\een
\edf

We will recall the following.

\bpro[Manning \cite{manning}] \label{pro:quasi tree}
Assume that a finitely generated group $\Gamma$ has a quasi-action on tree $X$. There exists a quasi-tree $X'$ such that $\Gamma$ acts by isometries on $X'$. Furthermore, $X'$ is quasi-equivariantly quasi-isometrically embedded into $X$.
\epro

We can now give the proof of Corollary~\ref{cor:qfa}.

\bp[Proof of Corollary~\ref{cor:qfa}]
Assume that $\Gamma$ has a quasi-action on a tree $X$. According to Proposition~\ref{pro:quasi tree}, $\Gamma$ has an action on a quasi-tree $X'$. According to Theorem~\ref{thm:main}, this action is elliptic or parabolic. Since $\Gamma$ is finitely generated, it has no parabolic action on a quasi-tree. As a consequence, the action of $\Gamma$ on $X'$ is elliptic, so the quasi-action of $\Gamma$ on $X$ has bounded orbits.\ep

We can now give the proof of Corollary~\ref{cor:mcg}. As explained in the introduction, we will only use that every action of a higher rank lattice on a hyperbolic space is elementary, and that higher rank lattices do not surject onto $\Z$ (which is a direct consequence of Property (T)).

\bp[Proof of Corollary~\ref{cor:mcg}]
Consider a morphism $\phi : \Gamma \ra MCG(S)$, where $S$ is a closed surface of genus $g$, with $p$ punctures. We can assume that $MCG(S)$ is infinite. Let $H=\phi(\Gamma)$.

The curve graph $\cal{C}(S)$ is hyperbolic by~\cite{masur_minsky}, so by Theorem~\ref{thm:main}, the action of $H$ on $\cal{C}(S)$ is elementary. 

According to~\cite{ivanov}, any subgroup of $MCG(S)$ having an elementary action on $\cal{C}(S)$ is either virtually cyclic or reducible. Since no finite index subgroup of $\Gamma$ surjects onto $\Z$, $H$ is not virtually cyclic. As a consequence, $H$ is reducible: some finite index subgroup $H_0$ fixes a curve $c$. Observe that the stabilizer of $c$ in $MCG(S)$ is a (product of) mapping class groups of surfaces of smaller complexities. By induction, one sees that $H$ is in fact finite.\ep

We can now give the proof of Corollary~\ref{cor:hhg}, which is exactly the same proof as the previous one, written in the more technical context of hierarchically hyperbolic groups.

\bp[Proof of Corollary~\ref{cor:hhg}]
Consider a morphism $\phi : \Gamma \ra G$, where $G$ is a hierarchically hyperbolic group. Let $\frak{S}$ denote the index set of $G$, let $S \in \frak{S}$ denote the maximally nested element, and let $\cal{C}(S)$ denote its associated hyperbolic space.

Let $H=\phi(G)$. The group $H$ acts by isometries on the hyperbolic space $\cal{C}(S)$: according to Theorem~\ref{thm:main}, the action of $H$ on $\cal{C}(S)$ is elementary. According to~\cite[Corollary~14.4]{hhg1}, the action of $G$ on $\cal{C}(S)$ is acylindric. So if $H$ has unbounded orbits, then $H$ is virtually cyclic by~\cite[Theorem~1.1]{osin}.

Since no finite index subgroup of $\Gamma$ surjects onto $\Z$, $H$ is not virtually cyclic. As a consequence, $H$ has bounded orbits in $\cal{C}(S)$.

According to the proof of~\cite[Theorem~9.15]{hhg4}, there exists $U \in \frak{S}$, $U \subsetneq S$, such that some finite index subgroup $H_0$ of $H$ fixes $U$. By induction on complexity, we conclude that $H$ is in fact finite.\ep

We finish with the proof of Corollary~\ref{cor:ah}.

\bp[Proof of Corollary~\ref{cor:ah}]
Consider a morphism $\phi : \Gamma \ra G$, where $G$ is an acylindrically hyperbolic group. Consider an acylindrical action of $G$ on a hyperbolic space $X$. Then according to Theorem~\ref{thm:main}, the action of $\phi(\Gamma)$ on $X$ is elliptic or parabolic. According to~\cite{osin}, there are no acylindrical parabolic actions on a hyperbolic space. As a consequence, the action of $\phi(\Gamma)$ on $X$ is elliptic.\ep

\sign

\newpage

\begin{center}
{\Large Appendix: Morphisms from higher rank lattices to $\text{Out}(F_N)$} \\
\bigskip
{\large Vincent Guirardel and Camille Horbez}
\end{center}

\bigskip
\bigskip

In this appendix,  we use Theorem \ref{thm:main} to show that homomorphisms from higher rank lattices $\Gamma$
to $\Out(G)$ have finite image when $G$ is a free group, a torsion-free hyperbolic group, 
and even a relatively hyperbolic group or a right-angled Artin group under suitable additional assumptions.
This was first proved by Bridson--Wade \cite{BW} for $\Out(F_N)$ and by Wade \cite{Wad} for right-angled Artin groups, for a more general class of groups $\Gamma$.
Note that their approach is based on the algebraic structure of the Torelli group, which is not available for hyperbolic groups.
A crucial step in what we do consists in understanding the case where $G$ is a free product.

\paragraph*{Statement of the main result.} Let $G$ be a countable group that splits as a free product of the form $$G=G_1\ast\dots\ast G_k\ast F_N,$$ where $F_N$ denotes a free group of rank $N$. We denote by $\text{Out}(G,\{G_i\})$ the subgroup of $\text{Out}(G)$ made of those automorphisms that preserve (setwise) the conjugacy classes of the subgroups $G_i$, and by $\text{Out}(G,\{G_i\}^{(t)})$ the subgroup made of automorphisms 
whose restriction to each $G_i$ coincides with the conjugation by an element $g_i\in G$.
Given a group $H$, we denote by $Z(H)$ the center of $H$.

\begin{theo}\label{free-product-1}
Let $\Gamma$ be a lattice in a product of higher rank almost simple connected algebraic groups with finite center over local fields. Let $G$ be a countable group that splits as a free product of the form $$G=G_1\ast\dots\ast G_k\ast F_N.$$
\\ Assume that for all $i\in\{1,\dots,k\}$, and every finite index subgroup $\Gamma_0\subseteq\Gamma$, every homomorphism from $\Gamma_0$ to $G_i/Z(G_i)$ has finite image.
\\ Then every homomorphism from $\Gamma$ to $\text{Out}(G,\{G_i\}^{(t)})$ has finite image.
\end{theo}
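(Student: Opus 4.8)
The plan is to argue by induction on the pair $(k,N)$, ordered lexicographically with $k$ first: the sporadic small cases are handled directly, and in the general case a hyperbolic $\Out(G,\{G_i\})$-graph together with Theorem~\ref{thm:main} is used to reduce the complexity. Throughout I replace $\Gamma$ by finite-index subgroups at will --- they are again lattices of the same type, hence have property (T), so they have finite abelianisation and property (FA) --- and I use that if a finite-index subgroup has finite image under a homomorphism then so does $\Gamma$ (pass to the normal core of the kernel). A preliminary observation I would establish first: for any free factor $H=(G_{i_1}\ast\dots\ast G_{i_s})\ast F_M$ of $G$ (including $H=G_i$ and $H=F_M$), every homomorphism from a finite-index $\Gamma_0\leq\Gamma$ to $H$ has finite image. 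Indeed, by property (FA), $\Gamma_0$ fixes a vertex of the Bass--Serre tree of this free product and so maps into a conjugate of some $G_i$ or of $\Z$; a homomorphism to $\Z$ is trivial since $\Gamma_0$ has finite abelianisation, while for a homomorphism $\Gamma_1\to G_i$ its composition with $G_i\ra G_i/Z(G_i)$ is finite by hypothesis and the preimage of $Z(G_i)$ meets $\Gamma_1$ in a finite-index subgroup mapping with finite image into the abelian group $Z(G_i)$; this gives finite image.

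For the sporadic cases one computes $\Out(G,\{G_i\}^{(t)})$ explicitly: it is trivial for $G=G_1$; it is $\Z/2\Z$ for $G=F_1$; it is $\GL_2(\Z)$ for $G=F_2$, which is virtually free and acts on a tree with finite stabilisers, so by property (FA) the image of $\Gamma$ is finite; for $G=G_1\ast G_2$, normalising an element of $\Out(G,\{G_1,G_2\}^{(t)})$ by an inner automorphism so that it is the identity on $G_1$ realises it as a ``partial conjugation'' and yields $\Out(G_1\ast G_2,\{G_1,G_2\}^{(t)})\cong(G_1/Z(G_1))\times(G_2/Z(G_2))$; and $\Out(G_1\ast F_1,\{G_1\}^{(t)})$ is similarly an iterated extension built from $G_1/Z(G_1)$ and $\Z/2\Z$. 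In each of these cases the hypothesis on homomorphisms $\Gamma_0\ra G_i/Z(G_i)$, together with finiteness of abelianisations, forces the image of $\Gamma$ to be finite.

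Now suppose $(G,\{G_i\})$ is non-sporadic. Then the relative free factor graph $\mathrm{FF}(G,\{G_i\})$ is connected and Gromov-hyperbolic (Handel--Mosher; Horbez, Guirardel--Horbez for free products), and fully irreducible elements of $\Out(G,\{G_i\})$ act on it as loxodromic isometries. Given $\rho\colon\Gamma\ra\Out(G,\{G_i\}^{(t)})$, Theorem~\ref{thm:main} says the induced $\Gamma$-action on $\mathrm{FF}(G,\{G_i\})$ is elliptic or parabolic, hence has no loxodromic isometry, so $\rho(\Gamma)$ contains no fully irreducible element; by the subgroup classification for $\Out(G,\{G_i\})$ (Handel--Mosher, and its free-product extension), a finite-index $\Gamma_0\leq\Gamma$ then has $\rho(\Gamma_0)$ preserving a proper free factor system refining $\{[G_i]\}$, say with $G=H_1\ast\dots\ast H_m\ast F_{N'}$ and $H_j=(\ast_{i\in I_j}G_i)\ast F_{N_j}$. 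After a further finite-index passage making each $[H_j]$ invariant, restriction to the $H_j$ gives a homomorphism into $\prod_j\Out(H_j,\{G_i\}_{i\in I_j}^{(t)})$; each factor is an instance of the theorem of strictly smaller complexity (as $H_j$ is a proper free factor), so by induction a further finite-index $\Gamma_1\leq\Gamma$ has $\rho(\Gamma_1)$ acting as conjugation on every $H_j$, i.e.\ $\rho(\Gamma_1)\leq\Out(G,\{H_j\}^{(t)})$. Finally, since all edge groups of the splitting are trivial, the Guirardel--Levitt exact sequence presents $\Out(G,\{H_j\}^{(t)})$ as an extension of a relative outer automorphism group of a free group of smaller rank --- the ``$k=0$'' case of the theorem, again of strictly smaller complexity --- by the subgroup of partial conjugations, which is an iterated extension of quotients $H_j/Z(H_j)$ and therefore, by the preliminary observation, receives only finite-image homomorphisms from $\Gamma_1$. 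Hence $\rho(\Gamma_1)$, and so $\rho(\Gamma)$, is finite.

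The main obstacle is the free-product bookkeeping, which is exactly the new ingredient the appendix singles out. One has to establish the isomorphism $\Out(G_1\ast G_2,\{G_1,G_2\}^{(t)})\cong(G_1/Z(G_1))\times(G_2/Z(G_2))$ and its relative analogues --- this is where the centre hypothesis genuinely enters and where $G_i/Z(G_i)$ appears as the natural quotient --- identify the partial-conjugation subgroup of $\Out(G,\{H_j\}^{(t)})$ and check it is normal with the expected quotient, and, most delicately, track the decorations so that the superscript $(t)$ is retained on the original factors $G_i$ but not imposed on the new vertex groups $H_j$, which is precisely why the statement decorates only $\{G_i\}$. One also needs the hyperbolicity and the subgroup-classification results in the relative (free-product) setting, and the verification that every reduction strictly decreases $(k,N)$.
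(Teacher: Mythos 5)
Your architecture (induction on complexity, explicit sporadic cases, Theorem~\ref{thm:main} applied to a hyperbolic $\Out(G,\calf)$-graph to produce an invariant proper free factor system, restriction to the factors) matches the paper's, and your preliminary observation via property (FA) and finite abelianization is essentially the paper's Lemmas~\ref{lem_1} and~\ref{point-stab}. But the last step of your inductive reduction has a genuine gap. After arranging $\rho(\Gamma_1)\leq\Out(G,\{H_j\}^{(t)})$, you claim this group is an extension of a relative outer automorphism group of a free group by a group of partial conjugations which is an iterated extension of the quotients $H_j/Z(H_j)$. That is false as soon as there are at least three factors (or two factors and free rank at least one): already for $G=H_1\ast H_2\ast H_3$ the partial conjugations of $H_1$ by elements of $H_2$ and by elements of $H_3$ generate a subgroup isomorphic to a quotient of $H_2\ast H_3$ by a central subgroup, so the kernel contains nonabelian free products and is in no sense built from the $H_j/Z(H_j)$ by abelian extensions. (Indeed $\Out(G,\{H_j\}^{(t)})$ is exactly the group the theorem is about for the pair $(G,\{H_j\})$; if it always had such a tame structure, your sporadic analysis would already prove the theorem for all $N=0$ cases.) The correct move is to apply the induction hypothesis to the pair $(G,\{H_j\})$ itself, the needed hypothesis on morphisms to $H_j/Z(H_j)$ being supplied by your preliminary observation. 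But then your lexicographic order on $(k,N)$ is not well-founded: refining the free factor system can increase the number of factors, e.g.\ $G=G_1\ast F_4$ has $(k,N)=(1,4)$ while the invariant system could be $G=G_1\ast F_2\ast F_2$ with $(m,N')=(2,2)$, which is lexicographically larger. One needs the paper's complexity $\xi(G,\calf)=\max(k-1,0)+N$ (the number of edges of a reduced relative Grushko tree), which strictly decreases both under passing to a proper free factor and under properly enlarging the free factor system.

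Beyond that gap, your route is genuinely different from the paper's in the key dichotomy. You act on the relative free factor graph and invoke the Handel--Mosher-type subgroup alternative for free products (no fully irreducible element implies a virtually invariant proper free factor system), which is a strong input. The paper instead acts on the graph of relative $\calz$-splittings and uses Horbez's classification of elementary actions on it, which produces a second alternative your argument never meets: $\rho(\Gamma)$ may only virtually fix the homothety class of a very small $(G,\calf)$-tree with trivial arc stabilizers. The paper must then handle that case using the structure of tree stabilizers (the homothety character, the map to $\prod_v\Out(G_v,\calf_{|G_v}^{(t)})$, and the embedding of $\operatorname{Stab}(T,\{G_v\}^{(t)})$ into products $G_v^{d_v}/Z(G_v)$), together with Lemma~\ref{point-stab}. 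If you insist on the free factor graph, you must cite the subgroup alternative in the relative setting and still repair the two issues above.
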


Before we prove Theorem~\ref{free-product-1}, we start by mentioning its consequences. 

\paragraph*{Automorphisms of free groups.} First, we notice that in the particular case where there is no peripheral group $G_i$, we obtain the following result due to Bridson--Wade.

\begin{coro}[Bridson--Wade \cite{BW}]\label{BW}
Let $\Gamma$ be a lattice in a product of higher rank almost simple connected algebraic groups with finite center over local fields.
\\ Then every homomorphism from $\Gamma$ to $\text{Out}(F_N)$ has finite image.
\qed
\end{coro}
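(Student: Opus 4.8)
The plan is to apply Theorem~\ref{free-product-1} in the degenerate case where there are no peripheral factors, i.e. $G = F_N$ with $k = 0$. First I would observe that in this situation the hypothesis on the peripheral factors $G_i$ is vacuous: there are no indices $i \in \{1,\dots,k\}$, so the condition ``every homomorphism from $\Gamma_0$ to $G_i/Z(G_i)$ has finite image'' holds trivially. Hence Theorem~\ref{free-product-1} applies directly and yields that every homomorphism from $\Gamma$ to $\mathrm{Out}(F_N,\{G_i\}^{(t)})$ has finite image.

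The only point requiring care is the identification of $\mathrm{Out}(F_N,\{G_i\}^{(t)})$ with $\mathrm{Out}(F_N)$ when $k=0$. Here $\{G_i\}$ is the empty collection of peripheral subgroups, so the subgroup of $\mathrm{Out}(F_N)$ consisting of automorphisms preserving (setwise) the conjugacy classes of the $G_i$, and restricting on each $G_i$ to an inner automorphism, is the whole of $\mathrm{Out}(F_N)$ — there are no constraints to impose. Thus $\mathrm{Out}(F_N,\{G_i\}^{(t)}) = \mathrm{Out}(F_N)$, and the conclusion is exactly that every homomorphism $\Gamma \to \mathrm{Out}(F_N)$ has finite image. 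I would also note, in passing, that the free group $F_N$ does of course admit a free product decomposition of the required form, namely $F_N = F_N$ itself (with $k=0$), so the structural hypothesis of Theorem~\ref{free-product-1} is satisfied.

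There is essentially no main obstacle here, since Corollary~\ref{BW} is a formal specialization of Theorem~\ref{free-product-1}; the substantive work is entirely contained in the proof of that theorem (and, through it, in Theorem~\ref{thm:main}). If one wanted a self-contained remark, the one subtlety worth flagging is that the case $N \leq 1$ is trivial anyway ($\mathrm{Out}(F_0)$ and $\mathrm{Out}(F_1) \cong \Z/2\Z$ are finite), so the content is for $N \geq 2$, where $\mathrm{Out}(F_N)$ is infinite and the rigidity statement is genuine.
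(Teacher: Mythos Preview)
Your proposal is correct and matches the paper's approach exactly: the corollary is stated immediately after Theorem~\ref{free-product-1} with the remark that it is the special case with no peripheral groups $G_i$, and is given a \qed\ with no further argument. Your additional observations (the vacuity of the hypothesis when $k=0$, the identification $\text{Out}(F_N,\emptyset^{(t)})=\text{Out}(F_N)$, and the triviality for $N\le 1$) are all correct elaborations of what the paper leaves implicit.
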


\paragraph*{Automorphisms of (relatively) hyperbolic groups.}

\begin{coro}\label{out-hyp}
Let $G$ be a torsion-free group which is hyperbolic relative to a finite collection of  finitely generated subgroups $P_1,\dots,P_k$. Let $\Gamma$ be a lattice in a product of higher rank almost simple connected algebraic groups with finite center over local fields. 
\\ Assume that for all $i\in\{1,\dots,k\}$, and for any finite index subgroup $\Gamma_0$ of $\Gamma$,
\begin{enumerate}
\item every homomorphism from $\Gamma_0$ to $P_i/Z(P_i)$ has finite image,
\item every homomorphism from $\Gamma_0$ to $\text{Out}(P_i)$ has finite image.
\end{enumerate}
\noindent Then every homomorphism from $\Gamma$ to $\text{Out}(G,\{P_i\})$ has finite image.
\end{coro}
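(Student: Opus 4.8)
The plan is to cut $G$ along canonical, $\text{Out}$-invariant splittings until only three kinds of building block remain: the peripheral subgroups $P_i$, surface-type (quadratically hanging) subgroups, and rigid pieces. For each of these the statement is already available — the peripheral blocks by hypotheses (1)--(2), the surface blocks because their relative outer automorphism groups embed in mapping class groups, so that Corollary~\ref{cor:mcg} applies, and the rigid blocks because their relative outer automorphism group is finite. Throughout we use that a finite-index subgroup of $\Gamma$ is again a lattice of the same kind, so that hypotheses (1)--(2) persist after passing to such a subgroup, and that $\Gamma$ has property (T), so that every finite-index subgroup of $\Gamma$ has finite abelianisation (in particular none surjects onto $\mathbb{Z}$). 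We argue by induction on the complexity of the pair $(G,\{P_i\})$ — first its Grushko complexity relative to $\{P_i\}$, then the complexity of its canonical JSJ; the base cases ($G$ elementary, or $G=P_i$) are immediate.

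First I would use the canonical Grushko decomposition $G=A_1\ast\cdots\ast A_q\ast F_N$ relative to the peripheral structure, in which each $A_j$ is either one of the $P_i$ or is one-ended relative to the peripheral subgroups it contains. This decomposition is canonical, hence permuted by $\text{Out}(G,\{P_i\})$, so after passing to a finite-index $\Gamma_0\le\Gamma$ each conjugacy class $[A_j]$ is preserved. The natural homomorphism $\text{Out}(G,\{A_j\})\to\prod_j\text{Out}(A_j)$, whose kernel is $\text{Out}(G,\{A_j\}^{(t)})$, then splits the problem in two. The image of $\Gamma_0$ under this map lies in $\prod_j\text{Out}(A_j,\{P_i\subset A_j\})$, which is finite: for $A_j=P_i$ by hypothesis (2), and for $A_j$ one-ended by the inductive conclusion applied to $A_j$ with its induced peripheral structure (of strictly smaller complexity when $q\ge 2$, so there is no circularity). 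After a further finite-index reduction we land in $\text{Out}(G,\{A_j\}^{(t)})$, to which Theorem~\ref{free-product-1} applies, provided every homomorphism from a finite-index subgroup of $\Gamma$ to $A_j/Z(A_j)$ has finite image. For $A_j=P_i$ this is hypothesis (1); for $A_j$ one-ended and non-elementary, $Z(A_j)$ is trivial and $\Gamma_0\to A_j$ has finite image by Theorem~\ref{thm:main} applied to the action of $A_j$ on its coned-off Cayley graph, using that a boundary point stabiliser of a relatively hyperbolic group is parabolic or virtually cyclic (a parabolic image being killed by hypothesis (1) and finite abelianisation, a virtually cyclic one by finite abelianisation). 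This reduces everything to the case where $G$ is one-ended relative to $\{P_i\}$ and non-elementary.

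In that case $G$ has a canonical JSJ decomposition over its elementary (equivalently, since $G$ is torsion-free, parabolic) subgroups relative to the peripheral structure, with Bass--Serre tree $T$; since it is canonical, $\text{Out}(G,\{P_i\})$ acts on $T$ with finite quotient, so after a finite-index reduction $\Gamma_0$ fixes $T$, acts trivially on $T/G$, and preserves each vertex and edge orbit. There is then an exact sequence
\[
1 \longrightarrow \mathcal{T} \longrightarrow \text{Out}(G,\{P_i\};T) \longrightarrow \prod_{v}\text{Out}\big(G_v;\,\mathrm{Inc}_v,\,\{P_i\subset G_v\}\big),
\]
where $\mathcal{T}$ is the group of twists and $\mathrm{Inc}_v$ denotes the incident edge groups at $v$. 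In the target, the factor at $v$ is finite when $G_v$ is rigid, embeds in a mapping class group when $G_v$ is quadratically hanging (finite image by Corollary~\ref{cor:mcg}), and is a subgroup of $\text{Out}(P_i)$ when $G_v=P_i$ (finite image by hypothesis (2)); so the image of $\Gamma_0$ in the product is finite, and a finite-index subgroup of $\Gamma$ maps into $\mathcal{T}$. Finally $\mathcal{T}$ is generated by Dehn twists along edges of $T$; each edge group is an infinite parabolic subgroup whose centraliser in $G$ lies in a conjugate of some $P_i$, and the group of twists along an edge is a subquotient of this centraliser, so the map of our finite-index subgroup into $\mathcal{T}$ factors through a product of subquotients of the $P_i$ — finite image by hypothesis (1) and finite abelianisation. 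Assembling the pieces, a finite-index subgroup of $\Gamma$ has finite image in $\text{Out}(G,\{P_i\})$, hence so does $\Gamma$, and the induction closes.

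I expect the main obstacle to be the precise treatment of the twist subgroup $\mathcal{T}$: identifying the centralisers of edge groups in the relatively hyperbolic setting and checking that the resulting twists are genuinely controlled by the quotients $P_i/Z(P_i)$ of hypothesis (1) — which is exactly why the hypothesis is stated in terms of $P_i/Z(P_i)$ rather than $P_i$ — together with the finiteness of the relative outer automorphism group of a rigid JSJ vertex, which needs a McCool-type finiteness theorem in this generality. With those two structural inputs in hand, the rest is a matter of feeding the three basic blocks into Theorem~\ref{free-product-1} and Corollary~\ref{cor:mcg} while keeping track of finite-index subgroups.
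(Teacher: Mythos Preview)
Your approach is essentially the paper's: reduce via the relative Grushko decomposition and Theorem~\ref{free-product-1} to the one-ended case, then use the canonical elementary JSJ to split off mapping-class-group and $\text{Out}(P_i)$ factors (handled by Corollary~\ref{cor:mcg} and hypothesis~(2)) before analysing the twist group via hypothesis~(1). The paper packages the twist analysis you flag as the obstacle into a separate lemma (Lemma~\ref{lem_twist}) showing $\mathcal{T}$ maps with \emph{abelian} kernel to a product of copies of $P_i/Z(P_i)$; note that edge groups in the elementary JSJ can be cyclic (maximal loxodromic) as well as parabolic, but the cyclic ones contribute only abelian twists and are absorbed by the abelian-kernel clause together with finite abelianisation.
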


A particular case of Corollary \ref{out-hyp} is the following result, stated in the introduction.

\setcounter{mainthm}{5}
\begin{maincor}
Let $G$ be a torsion-free Gromov hyperbolic group. Let $\Gamma$ be a lattice in a product of higher rank almost simple connected algebraic groups with finite center over local fields.
\\ Then every homomorphism from $\Gamma$ to $\text{Out}(G)$ has finite image.
\qed
\end{maincor}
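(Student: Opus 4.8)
The plan is to deduce the statement from Corollary~\ref{out-hyp} (and, underneath it, from Theorem~\ref{free-product-1}), applied to the Grushko decomposition of $G$. Discarding the trivial case, write $G=G_1\ast\dots\ast G_k\ast F_N$ for the Grushko decomposition, where each $G_i$ is freely indecomposable, infinite, and not infinite cyclic. Since $G_i$ is torsion-free and hyperbolic, Stallings' ends theorem forces $G_i$ to be one-ended, hence non-elementary, so $Z(G_i)=1$ and $G_i/Z(G_i)=G_i$; moreover $G$ is hyperbolic relative to $\{G_1,\dots,G_k\}$. By uniqueness of the Grushko decomposition, $\Out(G)$ permutes the finite set of conjugacy classes $\{[G_1],\dots,[G_k]\}$, so the subgroup of $\Out(G)$ fixing each $[G_i]$ has finite index. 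Hence, given $\psi\colon\Gamma\to\Out(G)$, after restricting to the finite-index subgroup $\Gamma_1\le\Gamma$ mapped into that subgroup (again a lattice of the required type, since finite-index subgroups of such lattices are again such) it suffices to prove $\psi(\Gamma_1)$ is finite. This is exactly the conclusion of Corollary~\ref{out-hyp} for $\Gamma_1$ and $G$ with peripheral structure $\{G_i\}$, so it remains only to verify hypotheses (1) and (2) of that corollary for the $G_i$ and every finite-index subgroup $\Gamma_0$ of $\Gamma$.

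For hypothesis~(1), let $f\colon\Gamma_0\to G_i/Z(G_i)=G_i$ be a morphism with image $H$. As a subgroup of the hyperbolic group $G_i$, $H$ is either elementary or contains a non-abelian free group. In the second case, two loxodromic generators of that free group have distinct fixed-point pairs on $\partial G_i$, so the $\Gamma_0$-action on the Cayley graph of $G_i$ — a proper geodesic Gromov-hyperbolic space — is non-elementary, contradicting Theorem~\ref{thm:main}. In the first case, if $H$ were infinite then, $G_i$ being torsion-free, $H$ would be infinite cyclic and $\Gamma_0$ would surject onto $\Z$, contradicting property~(T). Hence $H$ is finite.

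Hypothesis~(2) is the step I expect to be the genuine obstacle, because it requires importing the structure theory of $\Out$ of a one-ended hyperbolic group rather than using only the machinery developed in the paper. Let $\psi_i\colon\Gamma_0\to\Out(G_i)$ be a morphism. Using Bowditch's canonical (virtually cyclic) JSJ decomposition of $G_i$ together with Levitt's analysis of $\Out$ of graphs of groups (building on Rips--Sela and Paulin), $\Out(G_i)$ has a finite-index subgroup $\Out^0(G_i)$ sitting in an exact sequence
$$1\longrightarrow\mathcal T\longrightarrow\Out^0(G_i)\longrightarrow\prod_{v}\Out(G_{i,v};\partial_v),$$
where $\mathcal T$ is a finitely generated virtually abelian group of twists, $v$ ranges over the vertices of the JSJ, $\Out(G_{i,v};\partial_v)$ is finite at rigid vertices, and at each quadratically-hanging vertex $G_{i,v}$ is a surface group and $\Out(G_{i,v};\partial_v)$ is commensurable to the mapping class group of that (possibly bounded or punctured) surface. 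Restricting $\psi_i$ to the finite-index subgroup of $\Gamma_0$ mapped into $\Out^0(G_i)$ and composing with the projection, each rigid factor receives finite image trivially, each surface factor receives finite image by Corollary~\ref{cor:mcg} (the boundary Dehn twists forming a central free abelian subgroup, whose image is finite by property~(T)); a further finite-index subgroup of $\Gamma_0$ is then mapped into $\mathcal T$, which is amenable, so its image there is finite by property~(T). Hence $\psi_i(\Gamma_0)$ is finite.

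With (1) and (2) in hand, Corollary~\ref{out-hyp} yields that $\psi(\Gamma_1)$, and therefore $\psi(\Gamma)$, is finite. One can also bypass the relatively-hyperbolic formulation and argue through Theorem~\ref{free-product-1} directly: after reducing to a morphism $\psi_1\colon\Gamma_1\to\Out(G)$ with image fixing each $[G_i]$, use the natural restriction homomorphisms $\rho_i\colon\,$(this subgroup)$\,\to\Out(G_i)$, whose common kernel is $\Out(G,\{G_i\}^{(t)})$; by (2) each $\rho_i\circ\psi_1$ has finite image, so a further finite-index subgroup of $\Gamma_1$ maps into $\Out(G,\{G_i\}^{(t)})$, and Theorem~\ref{free-product-1} — whose hypothesis is precisely~(1) — gives finiteness. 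Every step except the verification of hypothesis~(2) is routine bookkeeping with finite-index subgroups combined with Theorem~\ref{thm:main} and property~(T).
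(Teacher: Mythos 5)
Your argument is correct, but it misses the one-line deduction the paper intends and, as a result, re-proves a substantial piece of Corollary~\ref{out-hyp} along the way. The paper's point (which is why the statement is given with no proof) is that a torsion-free hyperbolic group is hyperbolic relative to the \emph{empty} collection of peripheral subgroups, so Corollary~\ref{out-hyp} applies with $k=0$: hypotheses (1) and (2) are then vacuous, $\Out(G,\calp)=\Out(G)$, and the conclusion is immediate; all of the real work (Grushko reduction, the cyclic JSJ, mapping class group rigidity via Corollary~\ref{cor:mcg}, the abelian twist group killed by property (T), and Theorem~\ref{free-product-1}) already lives inside the proof of Corollary~\ref{out-hyp}. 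By instead taking the Grushko factors $G_i$ as the peripheral collection, you force yourself to verify hypothesis (2), i.e.\ to show that every morphism $\Gamma_0\to\Out(G_i)$ has finite image for each one-ended factor --- and your JSJ/Levitt argument for this is precisely Case 1 of the paper's proof of Corollary~\ref{out-hyp}, specialized to torsion-free hyperbolic groups (where the twist group is abelian), so you end up duplicating it. Note also that when $G$ is itself one-ended ($k=1$, $N=0$) your appeal to Corollary~\ref{out-hyp} is vacuous: hypothesis (2) for $G_1=G$ \emph{is} the conclusion, and the entire content of your proof in that case is the hypothesis-(2) verification, which is therefore better presented as a direct proof of the one-ended case. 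That said, your verifications are sound: hypothesis (1) follows as you say from Theorem~\ref{thm:main} applied to the Cayley graph of $G_i$ together with property (T), and hypothesis (2) from the structure of $\Out$ of a one-ended torsion-free hyperbolic group, with the surface factors handled by Corollary~\ref{cor:mcg} and the amenable pieces by property (T).
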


We will use the following simple result several times.

\begin{lemma}\label{lem_centre}
Let $P$ be a group, let $\phi:\Gamma\to P$ be a morphism, and let $N\normal P$ an abelian normal subgroup such that the image of $\Gamma$ in $P/N$ is finite.
\\ Then $\phi(\Gamma)$ is finite.
\end{lemma}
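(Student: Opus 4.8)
The plan is to reduce the statement to the standard fact that a finitely generated group with Kazhdan's property~(T) has finite abelianization, applied to a well-chosen finite-index subgroup of $\Gamma$.

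First I would set $\Gamma_0 := \ker\left(\Gamma \xrightarrow{\phi} P \twoheadrightarrow P/N\right)$. Since by hypothesis the image of $\Gamma$ in $P/N$ is finite, $\Gamma_0$ has finite index in $\Gamma$; and since $\Gamma$ is finitely generated (lattices in the groups under consideration are), so is $\Gamma_0$. By construction $\phi(\Gamma_0) \subseteq N$, hence $\phi(\Gamma_0)$ is abelian, and it is finitely generated, being a quotient of $\Gamma_0$.

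Next I would invoke property~(T): $\Gamma_0$, being a finite-index subgroup of the higher rank lattice $\Gamma$, is again a higher rank lattice, hence has property~(T), hence has finite abelianization. Therefore the abelian quotient $\phi(\Gamma_0)$ of $\Gamma_0$ is finite. (If one prefers to avoid property~(T) for $\Gamma_0$ explicitly, one may instead use the weaker fact, already quoted in the paper, that no finite-index subgroup of $\Gamma$ surjects onto $\Z$: a finitely generated abelian group with no infinite cyclic quotient is finite.) Finally, $\phi(\Gamma_0) = \phi(\Gamma)\cap N$, so $\phi(\Gamma)/\phi(\Gamma_0)$ embeds into $P/N$ with image contained in the (finite) image of $\Gamma$; thus $\phi(\Gamma_0)$ has finite index in $\phi(\Gamma)$, whence $\phi(\Gamma)$ is finite.

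I do not expect any genuine obstacle here: the only points requiring mild care are the finite generation of $\Gamma_0$ — needed so that "$\phi(\Gamma_0)$ abelian with no $\Z$-quotient" forces finiteness — and the fact that property~(T) (equivalently, the absence of $\Z$-quotients) passes to finite-index subgroups, both of which are standard.
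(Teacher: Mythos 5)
Your proof is correct and follows exactly the paper's argument: pass to the finite-index subgroup $\Gamma_0=\ker(\Gamma\to P/N)$, note $\phi(\Gamma_0)\subseteq N$ is an abelian quotient of $\Gamma_0$, and conclude by finiteness of the abelianization of $\Gamma_0$ (property (T)). The paper's own proof is just a two-line version of this; no differences worth noting.
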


\begin{proof}
  The hypothesis implies that $\Gamma$ has a finite index subgroup $\Gamma_0$ such that $\phi(\Gamma_0)\subset N$.
Since $\Gamma_0$ has finite abelianization, $\phi(\Gamma)$ is finite.
\end{proof}

\begin{proof}[Proof of Corollary~\ref{out-hyp}] Let $\calp=\{P_1,\dots,P_k\}$.  Let $\rho:\Gamma\to\text{Out}(G,\calp)$ be a morphism.
\paragraph{Case 1:} $G$ is freely indecomposable relative to the parabolic subgroups, i.e. $G$ has no decomposition into a free product
in which each $P_i$ is conjugate into a factor. 
\\ Let $\Lambda$ be the canonical elementary JSJ decomposition of $G$ relative to $\calp$ (see \cite[Theorem 4]{GL4}, \cite{Bow_JSJ} when $G$ is hyperbolic).
In this case $\text{Out}(G)$  has a finite index subgroup $\text{Out}^1(G)$ which is an extension of a finite product of mapping class groups of compact surfaces and subgroups of the outer automorphism groups $\text{Out}(P_i)$ by the group $\calt$ of twists of $\Lambda$ \cite[Theorem~4.3]{GL1}. Let $\Gamma_0$ be the finite index subgroup of $\Gamma$ made of all elements whose $\rho$-image lies in $\text{Out}^1(G)$. 
Using Farb--Kaimanovich--Masur's theorem (Corollary~\ref{cor:mcg}), together with our second hypothesis stating that every morphism from $\Gamma_0$ to $\text{Out}(P_i)$ has finite image, we get that the image of some finite index subgroup $\Gamma_1$ of $\Gamma$ is contained in $\calt$.
When $G$ is a torsion-free hyperbolic group, $\calt$ is an abelian group, which concludes the proof in this case.
In general, Lemma \ref{lem_twist} below shows that  there is a morphism from $\calt$ to a product of copies of $P_i/Z(P_i)$, whose kernel is abelian.
By hypothesis, any morphism from $\Gamma_1$ to $P_i/Z(P_i)$ has finite image. Applying Lemma \ref{lem_centre}, we get that $\Gamma_1$
has finite image in $\calt$.

\paragraph{General case.}
Consider a Grushko decomposition $$G=G_1\ast\dots\ast G_k\ast F_N$$ of $G$ relative to the parabolic subgroups: this is a decomposition of $G$ as a free product in which all subgroups in $\mathcal{P}$ are conjugate into one of the factors, where each $G_i$ is nontrivial, freely indecomposable relative to $\mathcal{P}_{|G_i}$, 
and not isomorphic to $\mathbb{Z}$, 
(here $\mathcal{P}_{|G_i}$ is defined as a choice of a conjugate of each $P_j$ contained in $G_i$ if it exists). Every subgroup $G_i$ is hyperbolic relative to $\mathcal{P}_{|G_i}$. 

Let $\Gamma_0<\Gamma$ be the finite index subgroup of elements whose $\rho$-image lies in the group 
 $\text{Out}^0(G,\calp)$ made of automorphisms that preserve 
 the conjugacy class of each subgroup $G_i$.
Since all subgroups $G_i$ are their own normalizers, there is a morphism $\text{Out}^0(G,\calp)\to\prod_{i=1}^k\text{Out}(G_i,\mathcal{P}_{|G_i})$ whose kernel is
$\text{Out}(G,\{G_i\}^{(t)})$. By Case 1, the image of $\Gamma_0$ in $\prod_{i=1}^k\text{Out}(G_i,\mathcal{P}_{|G_i})$ is finite
so there exists a finite index subgroup $\Gamma_1<\Gamma$ whose image in $\Out(G,\calp)$ is contained in $\text{Out}(G,\{G_i\}^{(t)})$.

To apply Theorem \ref{free-product-1}, 
let us check that for every finite index subgroup $\Gamma_2\subseteq\Gamma_1$, every homomorphism $\phi:\Gamma_2\to G_i/Z(G_i)$ has finite image.
If $G_i$ is elementary, then it is either cyclic or equal to a conjugate of some $P_i$, so this holds by assumption.
Otherwise, $Z(G_i)$ is trivial and by Theorem \ref{thm:main}, the image of $\Gamma_2$ is finite or parabolic.
In view of Lemma \ref{lem_centre}, our assumption implies that  $\phi(\Gamma_2)$ is finite.
Thus, Theorem \ref{free-product-1} applies  and concludes the proof.
\end{proof}

\begin{lemma} \label{lem_twist}
Let $G$ be a torsion-free group with is hyperbolic relative to $\calp=\{P_1,\dots, P_k\}$, and freely indecomposable relative to $\calp$. 
Let $\calt$ be the group of twists of the canonical elementary JSJ decomposition of $G$ relative to $\calp$.
Then $\calt$ maps with abelian kernel to a direct product of copies of $P_i/Z(P_i)$. 
\end{lemma}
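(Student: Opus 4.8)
The plan is to deduce the statement from the structure of the group of twists of a graph of groups, as developed by Levitt and Guirardel--Levitt (see~\cite{GL1}), applied to the canonical elementary JSJ decomposition $\Lambda$ of $G$ relative to $\calp$. Recall that $\calt$ is generated by \emph{twists}: for each oriented edge $\vec e$ of $\Lambda$, with terminal vertex $v$, edge monomorphism $G_e\hookrightarrow G_v$, and element $z\in Z_{G_v}(G_e)$, there is a twist $\tau^{\vec e}_z\in\calt$, and \cite{GL1} provides a presentation of $\calt$ on these generators whose defining relators express certain inner automorphisms --- parametrised by $z\in Z(G_e)$ for the edges, and by $z\in Z(G_v)$ for the vertices --- as products of twists. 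Since $G$ is torsion-free and hyperbolic relative to $\calp$, each edge group of $\Lambda$ is a torsion-free elementary subgroup, hence infinite cyclic or infinite parabolic; and since $G$ is freely indecomposable relative to $\calp$, no edge group is trivial. Call an edge \emph{peripheral} if its edge group is parabolic, and \emph{non-peripheral} otherwise. For a peripheral edge $e$, almost malnormality of maximal parabolic subgroups forces $G_e$, as well as all its centralisers in vertex groups, to lie in a single conjugate of some $P_{i(e)}$; fix $g_e\in G$ with $G_e\subseteq g_e P_{i(e)} g_e^{-1}$.

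I would first isolate the abelian part. Let $\calt_0\le\calt$ be the subgroup generated by the twists along non-peripheral edges. It is abelian: this is precisely the argument showing that $\calt$ is abelian when $G$ is a torsion-free hyperbolic group. After collapsing loops and rooting the underlying graph, one normalises each twist along a non-peripheral (two-ended, non-parabolic, hence essentially maximal cyclic) edge so that it is ``supported on the descendant side''; the supports of distinct such twists are then nested or disjoint, so the twists commute. Moreover $\calt_0$ is normal in $\calt$, since twists preserve the decomposition $\Lambda$.

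Next I would construct the homomorphism to a product of copies of the $P_i/Z(P_i)$. For a peripheral edge $e$, every twisting element along $e$ lies in $g_e P_{i(e)} g_e^{-1}$; conjugating a twist $\tau^{\vec e}_z$ so that it preserves the peripheral vertex group attached along $e$ shows that it acts there by an inner automorphism whose conjugating element is well-defined modulo the centre, namely the class of $g_e^{-1}zg_e$ in $P_{i(e)}/Z(P_{i(e)})$. Declaring $\psi(\tau^{\vec e}_z)$ to be this class in the coordinate indexed by $e$, and $\psi(\tau^{\vec e}_z)=1$ when $e$ is non-peripheral, I claim this descends to a homomorphism $\psi\colon\calt\to\prod_{e\ \mathrm{peripheral}}P_{i(e)}/Z(P_{i(e)})$; checking that the relators of \cite{GL1} are killed is the routine part: an edge relator dies because a twist composed with its reverse is inner; the vertex relator at a peripheral vertex $v_i$ dies because $Z(G_{v_i})=Z(P_i)$; and the remaining vertex relators are vacuous, their vertex groups being centre-free, or two-ended with only non-peripheral incident edges (using that $\Lambda$ is reduced).

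Finally I would prove that $\ker\psi=\calt_0$, which finishes the proof: $\calt_0$ is an abelian normal subgroup with $\calt/\calt_0\hookrightarrow\prod_{e}P_{i(e)}/Z(P_{i(e)})$, the displayed direct product of copies of the $P_i/Z(P_i)$. The inclusion $\calt_0\le\ker\psi$ is immediate from the construction. For the converse one uses that a twist along a peripheral edge by a central element of the relevant $P_i$ is inner, hence trivial in $\calt$, and then reads off from the presentation of $\calt$ that, modulo $\calt_0$, the twists along the peripheral edges are recorded faithfully by $\psi$. I expect this last step --- pinning down $\ker\psi$ on the nose, equivalently showing that the peripheral twists embed into $\prod_e\mathrm{Inn}(P_{i(e)})$ after killing $\calt_0$ --- to be the main obstacle, since it requires working carefully with the combinatorics of the presentation of the group of twists from~\cite{GL1}; if that reference already states the structure of $\calt$ as such an extension of a subgroup of $\prod_i\mathrm{Inn}(P_i)$ by an abelian group, the lemma is essentially immediate and the rest is bookkeeping.
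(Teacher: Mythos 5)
Your overall strategy is the paper's: both proofs run through Levitt's description of the group of twists, $\calt\cong\bigl(\prod_{\vec e}Z_{G_{t(e)}}(G_e)\bigr)/N$ with $N$ generated by the diagonal copies of the $Z(G_v)$ and $Z(G_e)$, and both send a twist to the inner automorphism it induces on the parabolic vertex group. But the step you yourself flag as the main obstacle is a genuine gap, and moreover the statement you propose to prove there is false. The kernel of your $\psi$ is \emph{not} $\calt_0$: for a peripheral edge $e$ with parabolic endpoint $w$ of degree at least $2$, the twist at $w$ by a nontrivial $z\in Z(G_w)=Z(P_i)$ lies in $\ker\psi$ but is neither a non-peripheral twist nor trivial in $\calt$ (it is inner only if you twist by $z$ simultaneously at \emph{all} edges incident to $w$ --- that is the vertex relator $N_V$ --- so your claim that ``a twist along a peripheral edge by a central element of the relevant $P_i$ is inner'' is wrong when $w$ has higher degree). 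There is also an orientation problem in the definition of $\psi$: the twist at the \emph{nonelementary} end $v$ of a peripheral edge by $z\in Z_{G_v}(G_e)$ fixes $G_w$ pointwise, so it does not ``act on the peripheral vertex group by conjugation by $z$''; via the edge relation it equals the twist at $w$ by $z^{-1}$, and unless you build this inverse into $\psi$ the edge relators are not killed.

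The fix, which is how the paper argues, is to not try to identify $\ker\psi$ with the non-peripheral twists at all: one only needs the kernel to be abelian, and in fact it is central. Concretely, since the JSJ is bipartite and $\langle G_e,Z_{G_v}(G_e)\rangle$ is elementary for $v$ nonelementary, one has $Z_{G_v}(G_e)=Z(G_e)$ on the nonelementary side, so the edge relators $N_E$ eliminate all twists at nonelementary vertices and exhibit $\calt$ as the quotient of $\prod_{e\in\vec E_{el}}Z_{G_{t(e)}}(G_e)$ by the diagonal $\prod_{v\in V_{el}}Z(G_v)$. Modding out by the larger subgroup $\prod_{e\in\vec E_{el}}Z(G_{t(e)})$ --- which is central because each $Z(G_{t(e)})$ is central in $G_{t(e)}$ --- gives a map to $\prod_{e\in\vec E_{el}}G_{t(e)}/Z(G_{t(e)})$ with central kernel, and the elementary vertex groups are cyclic or conjugates of the $P_i$. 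If you insist on your two-step formulation, you would need to replace ``$\ker\psi=\calt_0$'' by a direct verification that $\ker\psi$ is abelian (it is generated, per peripheral edge, by elements of the abelian group $Z(G_e)\cdot Z(P_i)$, together with the non-peripheral twists), but at that point the detour through $\calt_0$ buys you nothing.
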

 
\begin{proof}
 We consider $\Lambda$ the canonical elementary JSJ decomposition of $G$ relative to $\calp$ as described in \cite[Theorem 4]{GL4}.
We  follow \cite[\S 3]{Lev} for the following description of $\calt$. We denote by $V,\vec E,E$ the set of vertices, oriented edges and non-oriented edges  of $\Lambda$.
Then $\calt$ is isomorphic to the quotient $\Tilde \calt/N$ where 
$$\Tilde \calt=\prod_{e\in \vec E} Z_{G_{t(e)}}(G_e)$$ 
and $N=\grp{N_V,N_E}\normal\Tilde \calt$ is a central subgroup generated by $N_V,N_E$ defined as follows. 
The group $N_V=\prod_{v\in V} Z(G_v)$ is embedded in $\Tilde \calt$
by sending $Z(G_v)$ diagonally in $\prod_{t(e)=v} Z_{G_{t(e)}}(G_e)\subset \Tilde \calt$,
and $N_E=\prod_{\bar e\in E} Z(G_{\bar e})$  is embedded in $\Tilde \calt$ by sending $Z(G_{\bar e})$ diagonally in 
$Z_{G_{t(\overrightarrow e)}}(G_{\overrightarrow e})\times Z_{G_{t(\overleftarrow e})}(G_{\overleftarrow e})\subset\Tilde \calt$,
where $\overrightarrow e,\overleftarrow e$ are the two orientations of the non-oriented edge $\bar e\in E$. 

Now the canonical JSJ decomposition of $G$ is bipartite, where each edge joins a vertex with nonelementary stabilizer to a vertex
which is maximal elementary (i.e. maximal loxodromic, in particular cyclic, or conjugate to some $P_i$).
Denote by $V=V_{ne}\dunion V_{el}$ the corresponding partition of the vertices.
It follows that for each $e\in E$ such that $t(e)\in V_{ne}$, we have $Z_{G_{t(e)}}(G_e)=Z(G_e)$ (indeed, $\grp{G_e,Z_{G_v}(G_e)}$ is elementary and is therefore contained in the maximal elementary subgroup $G_{o(e)}$,
so $\grp{G_e,Z_{G_v}(G_e)}\subset G_e$). Thus, 
$$\Tilde \calt/N_E\simeq \prod_{e\in \vec E_{el}} Z_{G_{t(e)}}(G_e)$$ 
where  $\vec E_{el}\subset\vec E$ is the set of edges $e$ such that $t(e)\in V_{el}$.

Since $Z(G_v)$ is trivial for each $v\in V_{ne}$, the group $\calt$ is isomorphic to the
quotient of $\prod_{e\in \vec E_{el}} Z_{G_{t(e)}}(G_e)$ by the diagonal embedding of $\prod_{v\in V_{el}} Z(G_v)$. 
Moding out by the larger central subgroup $\prod_{e\in \vec E_{el}} Z(G_{t(e)})$,
we get that $\calt$ maps with central kernel to 
$$\prod_{e\in \vec E_{el}} Z_{G_{t(e)}}(G_e)/Z(G_{t(e)})\subset \prod_{e\in \vec E_{el}} G_{t(e)}/Z(G_{t(e)})=\prod_{v\in V_{el}} (G_v/Z(G_v))^{d_v}$$
where $d_v$ is the degree of the vertex $v$.
Now for $v\in V_{el}$, the group $G_v$ is either cyclic (in which case $G_v/Z(G_v)$ is trivial), or conjugate to a parabolic group $P_i$. This proves the lemma.
\end{proof}

\paragraph*{Automorphisms of right-angled Artin groups.}
Theorem~\ref{free-product-1} also enables us to find a new proof of Wade's theorem  about morphisms with values in the automorphism group of a right-angled Artin group \cite{Wad}. 
Given a finite simplicial graph $X$, the right-angled Artin group $A_X$ is defined as the group with one generator for each vertex in $X$,
and a commutation relation between each pair of vertices joined by an edge.

The \emph{SL-dimension} $d_{SL}(A_X)$ is defined as the maximal size of a clique in $X$ made of vertices that all have the same star in $X$. 
Note that $\Out(A_X)$ contains a group isomorphic to $GL(d,\bbZ)$ for $d=d_{SL}(A_X)$.

\begin{coro}[Wade \cite{Wad}]\label{Wade}
Let $\Gamma$ be a lattice in a product of higher rank almost simple connected algebraic groups with finite center over local fields.
\\ Let $d\in\mathbb{N}$ be such that  every homomorphism from a finite index subgroup $\Gamma_0<\Gamma$ to $GL(d,\mathbb{Z})$ has finite image.\\
Then for any  right-angled Artin group $A$ with $d_{SL}(A)\leq d$, 
any homomorphism from $\Gamma$ to $\text{Out}(A)$ has finite image.
\end{coro}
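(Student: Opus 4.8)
The plan is to induct on the number of vertices $n=|V(X)|$, tearing $\Out(A_X)$ apart according to the structure of the graph $X$ so that at every stage the given homomorphism factors through a group of one of the following controllable types: a finite group; a finitely generated free abelian group, killed because $\Gamma$ and all its finite-index subgroups, being lattices in products of higher rank almost simple groups, have property (T) and hence finite abelianisation; $GL(c,\bbZ)$ with $c\leq d_{SL}(A_X)\leq d$, killed by the standing hypothesis on $\Gamma$; the outer automorphism group of a RAAG on fewer than $n$ vertices and of SL-dimension $\leq d$, killed by the induction hypothesis; a relative outer automorphism group $\Out(G',\{G'_i\}^{(t)})$ attached to a free-product decomposition $G'=G'_1\ast\cdots\ast G'_k\ast F_N$ with each $G'_i$ a RAAG on fewer vertices, killed by Theorem~\ref{free-product-1}; or, in the extreme case with no peripheral factor, $\Out(F_N)$ itself, killed by Corollary~\ref{BW}, which is the case $k=0$ of Theorem~\ref{free-product-1} and hence a consequence of Theorem~\ref{thm:main}.

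\textbf{Running the cases.} If $X$ is a single vertex or complete, $\Out(A_X)$ is trivial or $GL(n,\bbZ)$ with $n=d_{SL}(A_X)\leq d$, and we are done. If $X$ is disconnected, take the Grushko-type decomposition $A_X=A_{Y_1}\ast\cdots\ast A_{Y_k}\ast F_N$ where $Y_1,\dots,Y_k$ are the connected components carrying an edge; the conjugacy classes of these factors are permuted by a finite group, so a finite-index subgroup $\Gamma_0\leq\Gamma$ maps into the stabiliser, and composing with restriction to the $\Out(A_{Y_i})$ gives, by induction (here $|Y_i|<n$ and $d_{SL}(A_{Y_i})\leq d_{SL}(A_X)\leq d$ since each $Y_i$ is a full component), morphisms with finite image; thus a further finite-index subgroup lands in $\Out(A_X,\{A_{Y_i}\}^{(t)})$, and Theorem~\ref{free-product-1} applies, its hypothesis holding because each $A_{Y_i}/Z(A_{Y_i})$ is again a RAAG, acting freely on a CAT(0) cube complex, so that property (T) of $\Gamma$ forces every morphism from a finite-index subgroup to such a group to be trivial. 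If $X$ is connected with a cone vertex, then $A_X=\bbZ^c\times A_{X'}$, where $\bbZ^c=Z(A_X)$ is spanned by the $c\leq d_{SL}(A_X)$ cone vertices and $X'$ is the centreless remaining graph; preservation of the centre gives an extension $1\to\operatorname{Hom}(H_1(A_{X'}),\bbZ^c)\to\Out(A_X)\to GL(c,\bbZ)\times\Out(A_{X'})\to 1$, and one projects to $GL(c,\bbZ)$ (finite image by hypothesis) and to $\Out(A_{X'})$ (finite image by induction, $|X'|<n$, $d_{SL}(A_{X'})\leq d$), then kills the free abelian kernel by finite abelianisation of $\Gamma$. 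If $X$ is connected, centreless, and a nontrivial join $X=X_1\ast X_2$, then $A_X=A_{X_1}\times A_{X_2}$ with both factors centreless, and $\Out(A_X)$ is built from the outer automorphism groups of the join-indecomposable direct factors, which live on proper full subgraphs of SL-dimension $\leq d$ and are handled by induction, together with a finite group permuting the isomorphic ones. Finally, if $X$ is connected, centreless, and indecomposable under both joins and free products, one invokes the finer structure theory of $\Out(A_X)$ (Charney--Vogtmann and subsequent work): a finite-index subgroup carries a finite normal series whose successive quotients are finite, finitely generated free abelian, copies of $GL(c_i,\bbZ)$ coming from sets of mutually interchangeable vertices (vertices with equal star, so $c_i\leq d_{SL}(A_X)\leq d$), or $\Out(F_m)$'s, outer automorphism groups of RAAGs on proper full subgraphs, and relative groups of the type covered by Theorem~\ref{free-product-1}; pushing the morphism down this series and killing each layer as above finishes this case, and with it the induction.

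\textbf{Main obstacle.} The technical heart is the last, fully indecomposable case: arranging $\Out(A_X)$ (up to finite index) into such a normal series, verifying that the only unbounded linear groups occurring there, apart from those already contained in the $\Out(F_m)$ layers, are of $GL$-type of rank $\leq d_{SL}(A_X)$ --- which is exactly what the invariant $d_{SL}$ is designed to measure, since it records the largest clique of mutually interchangeable vertices --- and checking that the RAAGs on subgraphs to which one restricts retain SL-dimension $\leq d$; this last point genuinely needs care, because $d_{SL}$ is not monotone under passing to arbitrary full subgraphs, although it is for the free-product, join, and centre reductions used above. Granting the structure theory, the rest is a routine descent in which property (T) removes the abelian layers, the hypothesis on $\Gamma$ removes the $GL(c,\bbZ)$ layers with $c\leq d$, Corollary~\ref{BW} removes the $\Out(F_m)$ layers, Theorem~\ref{free-product-1} removes the relative layers, and the induction hypothesis removes the smaller-RAAG layers --- the only genuinely hyperbolic input being the appeal, through Corollary~\ref{BW} and Theorem~\ref{free-product-1}, to Theorem~\ref{thm:main}.
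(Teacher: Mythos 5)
Your overall architecture (induction on the number of vertices, splitting into disconnected / centre / centreless cases, feeding the free-product case to Theorem~\ref{free-product-1} and the abelian layers to property (T)) matches the paper's, and your disconnected and centre cases are essentially the ones in the paper. But there is a genuine gap in your final case, and it is exactly the point you yourself flag as the ``main obstacle'' without resolving it. For $X$ connected and centreless you invoke an unspecified ``finite normal series'' for a finite-index subgroup of $\Out(A_X)$ with layers of controlled type; no such general structure theorem is available in that form, and in particular you give no reason why the $\Out$ of the smaller RAAGs appearing in it would still have SL-dimension $\leq d$. The relevant restriction map here is Charney--Vogtmann's $\Psi:\Out^0(A_X)\to\prod\Out^0(A_{\text{lk}([v])})$, taken over maximal equivalence classes of vertices, whose kernel is free abelian; the links $\text{lk}([v])$ are arbitrary induced subgraphs, and, as you correctly observe, $d_{SL}$ is \emph{not} monotone under passing to induced subgraphs (two vertices of a subgraph $Y$ can have the same star in $Y$ without having the same star in $X$). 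So the naive induction hypothesis ``$d_{SL}\leq d$ implies finite image'' cannot be propagated, and your argument does not close.

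The paper repairs this by strengthening the statement being proved by induction. One fixes an \emph{admissible partial order} $\prec$ on the vertices (meaning $\text{lk}(v)\subseteq\text{st}(w)$ whenever $v\prec w$), works with the subgroup $\Out^0(A_X,\prec)$ generated by partial conjugations and the transvections $v\mapsto vw$ with $v\prec w$, and measures the relative invariant $d_{SL}(X,\prec)$, the largest clique of pairwise $\prec$-equivalent vertices. This relative invariant \emph{is} monotone under restriction to induced subgraphs, $\Psi$ carries $\Out^0(A_X,\prec)$ into $\prod\Out^0(A_{\text{lk}([v])},\prec_{|\text{lk}([v])})$ (checked on generators), and taking $\prec=\prec_{\max}$ at the end recovers a finite-index subgroup of $\Out(A_X)$ by Laurence's generation theorem. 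With this relativization the connected centreless case needs no join decomposition and no indecomposable analysis: one applies $\Psi$, uses the induction hypothesis on each link, and kills the free abelian kernel with Lemma~\ref{lem_centre}. I would encourage you to rework your last two cases along these lines; without the relative invariant the induction genuinely does not go through.
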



\begin{proof}
Given a graph $X$, we say that a partial order $\prec$ on the vertex set of $X$ is \emph{admissible} if we have $\text{lk}(v)\subseteq\text{st}(w)$ whenever $v\prec w$. 
We define $\text{Out}^0(A_X,\prec)$ to be the subgroup of $\text{Out}(A_X)$ generated by partial conjugations and transvections of the form $v\mapsto vw$ with $v\prec w$. In particular, if $\prec_{\max}$ is the order defined by declaring that $v\prec_{\max}w$ whenever $\text{lk}(v)\subseteq\text{st}(w)$, then it follows from \cite{Laurence} that $\text{Out}^0(A_X,\prec_{\max})$ is a normal subgroup of finite index in $\text{Out}(A_X)$.
 We define $d_{SL}(X,\prec)$ as the maximal size of a clique in $X$ made of vertices that are pairwise $\prec$-equivalent (two vertices $v,w$ are $\prec$-equivalent if $v\prec w$ and $w\prec v$). In particular $d_{SL}(A_X)=d_{SL}(X,\prec_{\max})$. We note that if $Y\subset X$ is an induced subgraph (i.e. whenever $Y$ contains two vertices of $X$ joined by an edge in $X$, then $Y$ also contains this edge), then the restriction $\prec_{|Y}$ of $\prec$ to $Y$ is an admissible partial order on $Y$, and $d_{SL}(Y,\prec_{|Y})\leq d_{SL}(X,\prec)$.

Fix $d\geq 0$ and $\Gamma$ a lattice as in the statement. 
We will prove by induction on the number of vertices in $X$ that  if $(X,\prec)$ is a graph with an admissible partial ordering such that $d_{SL}(X,\prec)\leq d$, then every morphism $\rho:\Gamma\to\text{Out}^0(A_X,\prec)$ has finite image. 
Since $\text{Out}^0(A_X,\prec_{\max})$ has finite index in $\Out(A_X)$, the result will follow.

To prove the claim, first assume that $X$ is disconnected. The Grushko decomposition of $A_X$ is of the form $$A_X=A_{X_1}\ast\dots\ast A_{X_k}\ast F_N,$$ where $X_1,\dots,X_k$ are the connected components of $X$ which are not reduced to a point, and $X$ has $N$ connected components reduced to a point. Any automorphism in  $\text{Out}^0(A_X,\prec)$ preserves the conjugacy class of each $A_{X_i}$. Since $A_{X_i}$ is its own normalizer, 
there is a restriction map $r:\text{Out}^0(A_X,\prec)\to \prod_{i=1}^k\text{Out}(A_{X_i})$ whose kernel is contained in $\text{Out}(A_X,\{A_{X_i}\}^{(t)})$.

Looking at the image of the generators, we see that the image of $r$ is contained in $\prod_{i=1}^k\text{Out}^0(A_{X_i},\prec_{|X_i})$.
Since $d_{SL}(A_{X_i},\prec_{|X_i})\leq d_{SL}(X,\prec)\leq d$, our induction hypothesis shows that $r\circ \rho(\Gamma)$ is finite.
Thus, for some finite index subgroup $\Gamma_0\subset \Gamma$, we have $\rho(\Gamma_0)\subset \text{Out}(A_X,\{A_{X_i}\}^{(t)})$. To deduce that $\rho$ has finite image, it suffices to check that we can apply Theorem~\ref{free-product-1}.
Since a right-angled Artin group is the direct product of its center by another right-angled Artin group,
it is enough to check that any morphism from $\Gamma_0$ to a right-angled Artin group is trivial.
This follows from the fact that $\Gamma_0$ has property $(T)$, 
and that right-angled Artin groups are cubical.

We now assume that $X$ is connected. First, if the center of $A_X$ is nontrivial, then it is generated by the vertices in a clique $C\subset X$. 
By \cite[Proposition 4.4]{CV}, there is a morphism
 $$\Psi:\text{Out}^0(A_X,\prec_{\max})\to \Out(A_{C})\times\text{Out}(A_{X\setminus C})$$ whose kernel is free abelian.
By Lemma \ref{lem_centre}, it is enough to check that the image of $\Gamma$ in $\Out(A_{C})\times\text{Out}(A_{X\setminus C})$ is finite. 
 By looking at the images of the generators, we see that  $$\Psi(\text{Out}^0(A_X,\prec))\subset\Out^0(A_{C},\prec_{|C})\times\text{Out}^0(A_{X\setminus C},\prec_{|X\setminus C}).$$
 Notice that $\Out^0(A_{C},\prec_{|C})$ is isomorphic to a block-triangular subgroup of $SL(\#C,\mathbb Z)$, and the maximal size of a block is $d(C,\prec_{|C})\leq d$. 
Therefore the image of $\Gamma$ in $SL(\#C,\mathbb{Z})$ is virtually unipotent, hence finite
since finite index subgroups of $\Gamma$ have finite abelianization.
The fact that any morphism from $\Gamma$ to $\text{Out}^0(A_{X\setminus C},\prec_{|X\setminus C})$ follows from our induction hypothesis and we are done in this case.

We finally assume that $Z(A_X)$ is trivial. By  \cite[Corollary 3.3]{CV}, there is a morphism 
$$\Psi:\text{Out}^0(A_X)\to \prod\text{Out}^0(A_{\text{lk}([v])})$$   where the product is taken over all maximal equivalence classes of vertices $[v]$ for the order $\prec_{\max}$. The kernel $K$ of $\Psi$ is a free abelian group \cite[Theorem 4.2]{CV}.
By looking at the image of the generators, we see that
$$\Psi(\text{Out}^0(A_X,\prec) )\subset \prod\text{Out}^0(A_{\text{lk}([v])},\prec_{|\text{lk}([v])}).$$
By induction, the image of $\rho(\Gamma)$ under $\Psi$ is finite. 
Lemma \ref{lem_centre} concludes the proof.
\end{proof}

\paragraph*{Background on free products and their automorphisms.} The rest of this appendix is devoted to the proof of Theorem~\ref{free-product-1}. We start with some background on free products and their automorphism groups. We denote by $\calf$ the collection of all conjugacy classes of the subgroups $G_i$, and write $\text{Out}(G,\calf)$ and $\text{Out}(G,\calf^{(t)})$ instead of $\text{Out}(G,\{G_i\})$ and $\text{Out}(G,\{G_i\}^{(t)})$. A subgroup of $G$ is \emph{peripheral} if it is conjugate into one of the subgroups $G_i$.

A theorem of Kurosh \cite{Kur} states that every subgroup $H\subseteq G$ inherits a free product decomposition $H=(\ast_{j\in J} H_j)\ast F$, where each $H_j$ is conjugate to a subgroup of one of the peripheral subgroups $G_i$, and $F$ is a free subgroup of $G$. We denote by $\calf_{|H}$ the collection of all $H$-conjugacy classes of the subgroups $H_j$.

A \emph{$(G,\calf)$-tree} is an $\mathbb{R}$-tree $T$ equipped with a $G$-action, such that every peripheral group $G_i$ fixes a point in $T$. A \emph{$(G,\calf)$-free splitting} is a minimal (i.e. without proper invariant subtree), simplicial $(G,\calf)$-tree with trivial edge stabilizers. A \emph{$(G,\calf)$-free factor} is a subgroup of $G$ that coincides with a point stabilizer in some $(G,\calf)$-free splitting. More generally, a \emph{free factor system} of $(G,\calf)$ is a collection of subgroups of $G$ that arises as the collection of all nontrivial point stabilizers in a $(G,\calf)$-free splitting. 
A free factor system $\calf$ is \emph{smaller} than $\calf'$ if any group in $\calf$ is conjugate into a group in $\calf'$ (equivalently,
the free splitting defining  $\calf$ dominates the one defining $\calf'$).
A $(G,\calf)$-free factor is \emph{proper} if it is nonperipheral (in particular nontrivial) and not equal to $G$. In the Kurosh decomposition inherited by a free factor $A$, the set $J$ is finite, and the free group $F$ is finitely generated.

A \emph{relative $\calz$-splitting} is a minimal, simplicial $(G,\calf)$-tree with edge stabilizers trivial or cyclic and nonperipheral. The \emph{graph of relative $\calz$-splittings}, denoted by $FZ(G,\calf)$, is the graph (equipped with the simplicial metric) whose vertices are the homeomorphism classes of relative $\calz$-splittings, with an edge between two splittings $S,S'$ if they have a common refinement (i.e. there exists a relative $\calz$-splitting $\hat{S}$ which admits $G$-equivariant alignment-preserving maps onto both $S$ and $S'$). Hyperbolicity of the graph of relative $\calz$-splittings was first proved by Mann in the context of free groups \cite{Man}, and extended to the general case in \cite{Hor3}. The group $\text{Out}(G,\calf)$ has a natural action on $FZ(G,\calf)$.

\paragraph*{Proof of the main theorem.} We start by  stating two lemmas that will be useful in our proof of Theorem~\ref{free-product-1}.

Since $G_i^d/Z(G_i)$ maps to $(G_i/Z(G_i))^d$ with central kernel, Lemma \ref{lem_centre} yields the following statement.
\begin{lemma}\label{lem_1}
Under the hypotheses of Theorem~\ref{free-product-1}, for every finite index subgroup $\Gamma_0\subseteq\Gamma$,
\begin{enumerate}
\item Every morphism from  $\Gamma_0$ to $G_i$ has finite image.
\item For all $d\in\mathbb{N}$, every morphism from $\Gamma_0$ to $G_i^d/Z(G_i)$, where $Z(G_i)$ sits in $G_i^d$ via the diagonal inclusion map, has finite image.  \qed
\end{enumerate}
\end{lemma}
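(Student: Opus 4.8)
The plan is to derive both statements as formal consequences of Lemma~\ref{lem_centre}, fed by the standing hypothesis of Theorem~\ref{free-product-1} that every homomorphism from a finite index subgroup of $\Gamma$ to $G_i/Z(G_i)$ has finite image. In other words, this lemma is just a bookkeeping repackaging of that hypothesis into the two forms that will be convenient later, so I expect no genuine obstacle, only a short diagram chase.

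For the first statement, I would take an arbitrary morphism $\phi:\Gamma_0\to G_i$ and compose it with the projection $G_i\to G_i/Z(G_i)$. By hypothesis the composite $\Gamma_0\to G_i/Z(G_i)$ has finite image, and $Z(G_i)$ is an abelian normal subgroup of $G_i$, so Lemma~\ref{lem_centre} applied with $P=G_i$ and $N=Z(G_i)$ gives at once that $\phi(\Gamma_0)$ is finite.

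For the second statement, fix $d\in\mathbb{N}$ and a morphism $\phi:\Gamma_0\to G_i^d/Z(G_i)$, where $Z(G_i)$ is embedded diagonally. The point to record is that the center of $G_i^d$ is $Z(G_i)^d$, which contains the diagonal $Z(G_i)$, so there is a natural quotient map $q:G_i^d/Z(G_i)\to G_i^d/Z(G_i)^d=(G_i/Z(G_i))^d$ whose kernel $Z(G_i)^d/Z(G_i)\cong Z(G_i)^{d-1}$ is abelian (in fact central). Composing $q\circ\phi$ with each of the $d$ coordinate projections $(G_i/Z(G_i))^d\to G_i/Z(G_i)$ produces morphisms from $\Gamma_0$ to $G_i/Z(G_i)$, all with finite image by hypothesis; hence $q\circ\phi(\Gamma_0)$ is finite. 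Applying Lemma~\ref{lem_centre} with $P=G_i^d/Z(G_i)$ and $N=\ker q$ then yields that $\phi(\Gamma_0)$ is finite. The only step needing the slightest attention is identifying $\ker q$ and checking it is abelian and normal; everything else is formal.
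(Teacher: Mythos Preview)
Your proposal is correct and follows essentially the same route as the paper: the paper's entire proof is the single sentence ``Since $G_i^d/Z(G_i)$ maps to $(G_i/Z(G_i))^d$ with central kernel, Lemma~\ref{lem_centre} yields the following statement,'' which is exactly your argument for part~(2), and part~(1) is the analogous (simpler) application of Lemma~\ref{lem_centre} with $P=G_i$ and $N=Z(G_i)$. Your write-up simply unpacks these steps in more detail.
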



A $(G,\calf)$-tree is \emph{very small} if pointwise stabilizers of nondegenerate arcs in $T$ are either trivial, or cyclic and nonperipheral, and tripod stabilizers are trivial. Our second lemma concerns morphisms from a higher rank lattice to a subgroup of $G$ that stabilizes a point in a very small $(G,\calf)$-tree. 

\begin{lemma}\label{point-stab}
Let $T$ be a very small $(G,\calf)$-tree, and let $G_v\subseteq G$ be a point stabilizer in $T$. Let $d\in\mathbb{N}$. Then under the assumptions of Theorem~\ref{free-product-1}, every morphism from $\Gamma$ to $G_v^d/Z(G_v)$ (where $Z(G_v)$ sits in $G_v^d$ via the diagonal inclusion map) has finite image. 
\end{lemma}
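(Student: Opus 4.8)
The plan is to pin down the isomorphism type of $G_v$ using the very small hypothesis on $T$ together with the Kurosh subgroup theorem, and then to reduce every case to Lemma~\ref{lem_1} or to the finiteness of the abelianization of $\Gamma$.

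\textbf{Step 1 (structural observation on $G_v$).} Since $T$ is a $(G,\calf)$-tree, every peripheral subgroup of $G$ fixes a point of $T$; and since pointwise stabilizers of nondegenerate arcs of $T$ are trivial or cyclic and nonperipheral, a \emph{nontrivial} peripheral subgroup cannot fix a nondegenerate arc, hence fixes a unique point. From this I would deduce: if $G_v$ is peripheral, say $G_v\subseteq gG_ig^{-1}$, then either $G_v$ is trivial, or $G_v$ fixes both $v$ and the unique fixed point $w$ of $gG_ig^{-1}$; in the latter case $v=w$ (otherwise $G_v$ fixes the nondegenerate arc $[v,w]$ and is forced to be trivial or nonperipheral), so $gG_ig^{-1}\subseteq\mathrm{Stab}(v)=G_v$ and hence $G_v=gG_ig^{-1}$. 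More generally, writing a Kurosh decomposition $G_v=(\ast_j H_j)\ast F$ relative to $\calf_{|G_v}$, with $F$ free and each $H_j=G_v\cap g_jG_{i_j}g_j^{-1}$, the same argument shows that every nontrivial $H_j$ is a full conjugate $g_jG_{i_j}g_j^{-1}$ of one of the original factors. In particular the Grushko splitting of $(G_v,\calf_{|G_v})$ has trivial edge groups and vertex groups that are either trivial or conjugates of the $G_i$.

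\textbf{Step 2 (the three cases).} Let $\phi\colon\Gamma\to G_v^d/Z(G_v)$ be a morphism. If $G_v$ is trivial, or $G_v=gG_ig^{-1}$ is a conjugate of some $G_i$, then $G_v^d/Z(G_v)$ is trivial or isomorphic to $G_i^d/Z(G_i)$ with $Z(G_i)$ diagonal, so $\phi(\Gamma)$ is finite by Lemma~\ref{lem_1}(2). If $G_v\cong\Z$ but is not a conjugate of any $G_i$, then $Z(G_v)=G_v$ and $G_v^d/Z(G_v)$ is the quotient of $\Z^d$ by its diagonal $\Z$, a finitely generated abelian group; since $\Gamma$ has property (T) its abelianization is finite, so $\phi(\Gamma)$ is finite. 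In the remaining case, Step~1 and the Kurosh decomposition force $G_v$ to be a nontrivial free product, so $Z(G_v)$ is trivial and $\phi$ becomes a morphism $\Gamma\to G_v^d$; composing with coordinate projections it suffices to show that any morphism $\psi\colon\Gamma\to G_v$ has finite image. Here I would let $\Gamma$ act, via $\psi$, on the Bass--Serre tree $S$ of the Grushko splitting of $(G_v,\calf_{|G_v})$; as $\Gamma$ is a higher rank lattice it has property (FA), so $\psi(\Gamma)$ fixes a vertex of $S$ and hence lies in a conjugate of one of its vertex groups. By Step~1 that vertex group is trivial or a conjugate of some $G_i$, so $\psi(\Gamma)$ is finite by Lemma~\ref{lem_1}(1).

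\textbf{Expected main difficulty.} I do not anticipate a serious obstacle. The real content is Step~1, which is exactly where the ``very small'' hypothesis on $T$ is used: it forces the peripheral pieces appearing in the Kurosh decomposition of $G_v$ to be honest conjugates of the factors $G_i$, which is what makes Lemma~\ref{lem_1} applicable. One should be mildly careful that $G_v$ may fail to be finitely generated, so that the Kurosh decomposition and the tree $S$ can be infinite; but property (FA) gives fixed points for actions on arbitrary trees, so this causes no trouble.
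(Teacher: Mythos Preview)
Your argument is correct and follows the same skeleton as the paper's proof: the key Step~1 observation (that the very small hypothesis forces each Kurosh factor $H_j$ of $G_v$ to be a full conjugate of some $G_i$) is exactly what the paper uses, and the subsequent case split is the same. There are two minor differences worth noting. First, the paper begins by reducing from $G_v^d/Z(G_v)$ to $G_v/Z(G_v)$ via the map $G_v^d/Z(G_v)\to (G_v/Z(G_v))^d$ with abelian kernel (Lemma~\ref{lem_centre}), whereas you handle the product directly by projecting to each coordinate; both are fine. Second, in the free-product case the paper invokes Theorem~\ref{thm:main} for the action on the Bass--Serre tree (together with the fact that a finitely generated group has no parabolic action on a tree), while you use property~(FA), which $\Gamma$ has as a consequence of property~(T). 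Your route is slightly more elementary and avoids appealing to the main theorem for this step; the paper's route has the virtue of keeping the whole appendix self-contained under the single umbrella of Theorem~\ref{thm:main}. Your remark that the Kurosh decomposition of $G_v$ may be infinite is apt, and your resolution (property~(FA) holds for actions on arbitrary simplicial trees) is correct.
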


\begin{proof}
As above, it is enough to prove that every morphism $\rho:\Gamma\to G_v/Z(G_v)$ has finite image.
The subgroup $G_v\subseteq G$ inherits a free product decomposition $G_v=(\ast_j H_j)\ast F$, where each $H_{j}$ is conjugate into some peripheral group $G_i$, and $F$ is a free group. 

Since $G_v$ is a point stabilizer in a very small $(G,\calf)$-tree, each subgroup $H_j$ in this decomposition is actually equal to a conjugate of some $G_i$ (it cannot be a proper subgroup of $G_i$): this is because in a very small $(G,\calf)$-tree, every peripheral subgroup
 fixes a unique point. 

The conclusion obviously holds if $G_v$ is isomorphic to $\mathbb{Z}$, and it holds by hypothesis if $G_v$ is a conjugate of one of the subgroups $G_i$. In all other cases, the center $Z(G_v)$ is trivial. It then follows from Theorem~\ref{thm:main} that the image of $\rho:\Gamma\to G_v$ is contained in one of the factors $H_j$, and the first assertion of Lemma~\ref{lem_1} implies that this image is finite. 
\end{proof}

\begin{proof}[Proof of Theorem~\ref{free-product-1}]
We assume that all subgroups $G_i$ are nontrivial. We define the \emph{complexity} of $(G,\calf)$ as 
$\xi(G,\calf):=\max(k-1,0)+N$ (this is the number of edges of any reduced Grushko $(G,\calf)$-tree).
The proof goes by induction on $\xi(G,\calf)$.
Let $\rho:\Gamma\to\text{Out}(G,\calf^{(t)})$ be a homomorphism.
\\
\\
\underline{\textbf{Initialization}} 
\\ We first treat the cases where $\xi(G,\calf)\leq 1$.
\\ $\ast$ The statement is obvious if either $k=1$ and $N=0$ (i.e. $G=G_1$) or $k=0$ and $N=1$ (i.e. $G=\mathbb{Z}$).
\\ $\ast$ If $k=2$ and $N=0$, i.e. $G=G_1\ast G_2$, then by \cite{Lev}, the group $\text{Out}(G,\{G_1,G_2\}^{(t)})$ is isomorphic to $G_1/Z(G_1)\times G_2/Z(G_2)$, and the result follows from our hypothesis that every homomorphism from $\Gamma$ to either $G_1/Z(G_1)$ or $G_2/Z(G_2)$ has finite image.
\\ $\ast$ If $k=1$ and $N=1$, i.e. $G=G_1\ast\mathbb{Z}$, then by \cite{Lev}, the group $\text{Out}(G,\{G_1\}^{(t)})$ has a subgroup of index $2$ isomorphic to $(G_1\times G_1)/Z(G_1)$ (where $Z(G_1)$ sits as a subgroup of $G_1\times G_1$ via the diagonal inclusion map). The result then follows from the second assertion of Lemma~\ref{lem_1}.
\\
\\
\underline{\textbf{Inductive step}}
\\ We now assume that $\xi(G,\calf)\ge 2$. 
 Theorem~\ref{thm:main} ensures that (up to replacing $\Gamma$ by a finite index subgroup) the image $\rho(\Gamma)$ in $\text{Out}(G,\calf^{(t)})$ acts elementarily on the $\calz$-splitting graph $FZ(G,\calf)$. By \cite[Theorem 4.3]{Hor}, either $\rho(\Gamma)$ virtually fixes the conjugacy class of a proper $(G,\calf)$-free factor, or else it virtually fixes the homothety class of a very small $(G,\calf)$-tree with trivial arc stabilizers.

Up to replacing $\Gamma$ by a finite index subgroup, we first assume that $\rho(\Gamma)$ fixes the conjugacy class of a proper $(G,\calf)$-free factor $A$. 
 We denote by $\calf'$ the smallest free factor system of $(G,\calf)$ such that $A\in \calf'$. 
There is a morphism $$\phi:\rho(\Gamma)\to\text{Out}(A,\mathcal{F}_{|A}^{(t)})$$ whose kernel is contained in 
$\text{Out}(G,\calf'^{(t)})$. 
We have $\xi(A,\calf_{|A})<\xi(G,\calf)$ 
so a first application of the induction hypothesis shows that $\phi$ has finite image. Hence $\rho(\Gamma)$ is virtually a subgroup of $\text{Out}(G,\calf'^{(t)})$. We also have $\xi(G,\calf')<\xi(G,\calf)$ 
so a second application of the induction hypothesis shows that $\rho(\Gamma)$ is finite. 

We now assume that $\rho(\Gamma)$ fixes the homothety class of very small $(G,\calf)$-tree $T$ with trivial arc stabilizers. There is a morphism $\lambda:\rho(\Gamma)\to\mathbb{R}_+^\ast$, given by the homothety factor (i.e. $\lambda(\Phi)$ is the unique real number such that $T.\Phi=\lambda(\Phi).T$). The morphism $\lambda$ has finite (hence trivial) image because $\mathbb{R}_+^\ast$ is abelian. Therefore $\rho(\Gamma)$ is contained in the stabilizer $\text{Stab}(T)$ of the isometry class of $T$. Since point stabilizers in $T$ are malnormal in $G$, there is a morphism $\psi$ from $\rho(\Gamma)$ to the direct product of all subgroups $\text{Out}(G_v,\calf_{|G_v}^{(t)})$, where $G_v$ varies among a finite set of representatives of the conjugacy classes of all nontrivial point stabilizers of $T$. The kernel  of $\psi$ is contained in the subgroup $\text{Stab}(T,\{G_v\}^{(t)})$ made of automorphisms that fix the isometry class of $T$ and act by conjugation on each subgroup $G_v$. We also know that $G_v$ is finitely generated \cite[Corollary 4.5]{Hor2}, so the Kurosh decomposition of $G_v$ is finite, i.e. it has finitely many factors, and the free subgroup arising in the decomposition is finitely generated. For all branch points $v$ of $T$, we have $\xi(G_v,\calf_{|G_v})<\xi(G,\calf)$ (see the proof of \cite[Theorem~6.3]{Hor}), so by induction $\psi$ has finite image. So $\rho(\Gamma)$ is virtually a subgroup of $\text{Stab}(T,\{G_v\}^{(t)})$. 
By \cite{GL}, $\text{Stab}(T,\{G_v\}^{(t)})$ virtually injects into a direct product of $G_v^{d_v}/Z(G_v)$, where $d_v$ denotes the degree of $v$ in $T$. It then follows from Lemma~\ref{point-stab} that $\rho$ has finite image, as required.
\end{proof}

\bibliographystyle{smfalpha_perso}
\bibliography{bibli}

\def\polhk#1{\setbox0=\hbox{#1}{\ooalign{\hidewidth
  \lower1.5ex\hbox{`}\hidewidth\crcr\unhbox0}}}
\providecommand{\bysame}{\leavevmode ---\ }
\providecommand{\og}{``}
\providecommand{\fg}{''}
\providecommand{\smfandname}{\&}
\providecommand{\smfedsname}{\'eds.}
\providecommand{\smfedname}{\'ed.}
\providecommand{\smfmastersthesisname}{M\'emoire}
\providecommand{\smfphdthesisname}{Th\`ese}
\begin{thebibliography}{BFGM07}

\bibitem[BF14]{bader_furman_icm}
{\scshape U.~Bader {\normalfont \smfandname} A.~Furman} -- {\og {Boundaries,
  rigidity of representations, and Lyapunov exponents}\fg},  (2014),
  {arXiv:1404.5107}.

\bibitem[BFGM07]{bader_furman_gelander_monod}
{\scshape U.~Bader, A.~Furman, T.~Gelander {\normalfont \smfandname} N.~Monod}
  -- {\og Property ({T}) and rigidity for actions on {B}anach spaces\fg},
  \emph{Acta Math.} \textbf{198} (2007), no.~1, p.~57--105.

\bibitem[BHS14]{hhg1}
{\scshape J.~Behrstock, M.~Hagen {\normalfont \smfandname} A.~Sisto} -- {\og
  {Hierarchically hyperbolic spaces I: curve complexes for cubical groups}\fg},
   (2014), {arXiv:1412.2171}.

\bibitem[BHS15a]{hhg3}
\bysame , {\og {Asymptotic dimension and small-cancellation for hierarchically
  hyperbolic spaces and groups}\fg},  (2015), {arXiv:1512.06071}.

\bibitem[BHS15b]{hhg2}
\bysame , {\og {Hierarchically hyperbolic spaces II: Combination theorems and
  the distance formula}\fg},  (2015), {arXiv:1509.00632}.

\bibitem[BL96]{ballmann_ledrappier}
{\scshape W.~Ballmann {\normalfont \smfandname} F.~Ledrappier} -- {\og
  Discretization of positive harmonic functions on {R}iemannian manifolds and
  {M}artin boundary\fg}, S\'emin. Congr., in \emph{Actes de la {T}able {R}onde
  de {G}\'eom\'etrie {D}iff\'erentielle ({L}uminy, 1992)} \textbf{1~}, Soc.
  Math. France, Paris, 1996, p.~77--92.

\bibitem[BM02]{burger_monod}
{\scshape M.~Burger {\normalfont \smfandname} N.~Monod} -- {\og Continuous
  bounded cohomology and applications to rigidity theory\fg}, \emph{Geom.
  Funct. Anal.} \textbf{12} (2002), no.~2, p.~219--280.

\bibitem[Bow98]{Bow_JSJ}
{\scshape B.~Bowditch} -- {\og Cut points and canonical splittings of
  hyperbolic groups\fg}, \emph{Acta Math.} \textbf{180} (1998), no.~2,
  p.~145--186.

\bibitem[Bow13a]{bowditch_coarse_median}
{\scshape B.~H. Bowditch} -- {\og Coarse median spaces and groups\fg},
  \emph{Pacific J. Math.} \textbf{261} (2013), no.~1, p.~53--93.

\bibitem[Bow13b]{bowditch_rh_coarse_median}
\bysame , {\og Invariance of coarse median spaces under relative
  hyperbolicity\fg}, \emph{Math. Proc. Cambridge Philos. Soc.} \textbf{154}
  (2013), no.~1, p.~85--95.

\bibitem[BW11a]{BW}
{\scshape M.~Bridson {\normalfont \smfandname} R.~Wade} -- {\og Actions of
  higher-rank lattices on free groups\fg}, \emph{Comp. Math.} \textbf{147}
  (2011), no.~5, p.~1573--1580.

\bibitem[BW11b]{bridson_wade}
{\scshape M.~R. Bridson {\normalfont \smfandname} R.~D. Wade} -- {\og Actions
  of higher-rank lattices on free groups\fg}, \emph{Compos. Math.} \textbf{147}
  (2011), no.~5, p.~1573--1580.

\bibitem[CDH10]{chatterji_drutu_haglund}
{\scshape I.~Chatterji, C.~Dru{\c{t}}u {\normalfont \smfandname} F.~Haglund} --
  {\og Kazhdan and {H}aagerup properties from the median viewpoint\fg},
  \emph{Adv. Math.} \textbf{225} (2010), no.~2, p.~882--921.

\bibitem[Che00]{chepoi_median_cat0}
{\scshape V.~Chepoi} -- {\og Graphs of some {${\rm CAT}(0)$} complexes\fg},
  \emph{Adv. in Appl. Math.} \textbf{24} (2000), no.~2, p.~125--179.

\bibitem[CV09]{CV}
{\scshape R.~Charney {\normalfont \smfandname} K.~Vogtmann} -- {\og Finiteness
  properties of automorphism groups of right-angled {A}rtin groups\fg},
  \emph{Bull. London Math. Soc.} \textbf{41} (2009), p.~94--102.

\bibitem[DHS16]{hhg4}
{\scshape M.~Durham, M.~Hagen {\normalfont \smfandname} A.~Sisto} -- {\og
  {Boundaries and automorphisms of hierarchically hyperbolic spaces}\fg},
  (2016), {arXiv:1604.01061}.

\bibitem[dLdlS15]{delaat_delasalle}
{\scshape T.~de~Laat {\normalfont \smfandname} M.~de~la Salle} -- {\og Strong
  property ({T}) for higher-rank simple {L}ie groups\fg}, \emph{Proc. Lond.
  Math. Soc. (3)} \textbf{111} (2015), no.~4, p.~936--966.

\bibitem[FM98]{farb_masur}
{\scshape B.~Farb {\normalfont \smfandname} H.~Masur} -- {\og Superrigidity and
  mapping class groups\fg}, \emph{Topology} \textbf{37} (1998), no.~6,
  p.~1169--1176.

\bibitem[GL11]{GL4}
{\scshape V.~Guirardel {\normalfont \smfandname} G.~Levitt} -- {\og Trees of
  cylinders and canonical splittings\fg}, \emph{Geom. Topol.} \textbf{15}
  (2011), no.~2, p.~977--1012.

\bibitem[GL15]{GL1}
\bysame , {\og Splittings and automorphisms of relatively hyperbolic
  groups\fg}, \emph{Geom. Groups Dyn.} \textbf{9} (2015), no.~2, p.~599--663.

\bibitem[GLon]{GL}
\bysame ,  (in preparation).

\bibitem[Hae14]{haettel_coarse_median}
{\scshape T.~Haettel} -- {\og {Higher rank lattices are not coarse median}\fg},
   (2014), {arXiv:1412.1714}.

\bibitem[Hora]{Hor2}
{\scshape C.~Horbez} -- {\og The boundary of the outer space of a free
  product\fg}, \emph{to appear in Israel J. Math.}

\bibitem[Horb]{Hor3}
\bysame , {\og Hyperbolic graphs for free products, and the {G}romov boundary
  of the graph of cyclic splittings\fg}, \emph{to appear in J. Topol.}

\bibitem[Hor14]{Hor}
\bysame , {\og The {T}its alternative for the automorphism group of a free
  product\fg}, \emph{arXiv:1408.0546v2} (2014).

\bibitem[Hru10]{hruska}
{\scshape G.~C. Hruska} -- {\og Relative hyperbolicity and relative
  quasiconvexity for countable groups\fg}, \emph{Algebr. Geom. Topol.}
  \textbf{10} (2010), no.~3, p.~1807--1856.

\bibitem[Iva92]{ivanov}
{\scshape N.~V. Ivanov} -- Translations of Mathematical Monographs,
  \emph{Subgroups of {T}eichm\"uller modular groups} \textbf{115~}, American
  Mathematical Society, Providence, RI, 1992, Translated from the Russian by E.
  J. F. Primrose and revised by the author.

\bibitem[Kai92]{kaimanovich}
{\scshape V.~A. Kaimanovich} -- {\og Discretization of bounded harmonic
  functions on {R}iemannian manifolds and entropy\fg}, in \emph{Potential
  theory ({N}agoya, 1990)}, de Gruyter, Berlin, 1992, p.~213--223.

\bibitem[KL97]{kleiner_leeb}
{\scshape B.~Kleiner {\normalfont \smfandname} B.~Leeb} -- {\og {Rigidity of
  quasi-isometries for symmetric spaces and Euclidean buildings}\fg},
  \emph{{Publ. Math. IH\'ES}} \textbf{86} (1997), p.~115--197.

\bibitem[KM96]{kaimanovich_masur}
{\scshape V.~A. Kaimanovich {\normalfont \smfandname} H.~Masur} -- {\og The
  {P}oisson boundary of the mapping class group\fg}, \emph{Invent. Math.}
  \textbf{125} (1996), no.~2, p.~221--264.

\bibitem[Kur34]{Kur}
{\scshape A.~Kurosh} -- {\og Die {U}ntergruppen der freien {P}rodukte von
  beliebigen {G}ruppen\fg}, \emph{Math. Ann.} \textbf{109} (1934), no.~1,
  p.~647--660.

\bibitem[Laf08]{lafforgue_t}
{\scshape V.~Lafforgue} -- {\og Un renforcement de la propri\'et\'e ({T})\fg},
  \emph{Duke Math. J.} \textbf{143} (2008), no.~3, p.~559--602.

\bibitem[Lan13]{lang}
{\scshape U.~Lang} -- {\og Injective hulls of certain discrete metric spaces
  and groups\fg}, \emph{J. Topol. Anal.} \textbf{5} (2013), no.~3, p.~297--331.

\bibitem[Lau95]{Laurence}
{\scshape M.~Laurence} -- {\og A generating set for the automorphism group of a
  graph group\fg}, \emph{J. London Math. Soc.} \textbf{52} (1995), no.~2,
  p.~318--334.

\bibitem[Lev04]{Lev}
{\scshape G.~Levitt} -- {\og Automorphisms of {H}yperbolic {G}roups and
  {G}raphs of {G}roups\fg}, \emph{Geom. Dedic.} \textbf{114} (2004), no.~1,
  p.~49--70.

\bibitem[Lia14]{liao}
{\scshape B.~Liao} -- {\og Strong {B}anach property ({T}) for simple algebraic
  groups of higher rank\fg}, \emph{J. Topol. Anal.} \textbf{6} (2014), no.~1,
  p.~75--105.

\bibitem[LS84]{lyons_sullivan}
{\scshape T.~Lyons {\normalfont \smfandname} D.~Sullivan} -- {\og Function
  theory, random paths and covering spaces\fg}, \emph{J. Differential Geom.}
  \textbf{19} (1984), no.~2, p.~299--323.

\bibitem[Man06]{manning}
{\scshape J.~F. Manning} -- {\og Quasi-actions on trees and property
  ({QFA})\fg}, \emph{J. London Math. Soc. (2)} \textbf{73} (2006), no.~1,
  p.~84--108, With an appendix by N. Monod and B. R{\'e}my.

\bibitem[Man14]{Man}
{\scshape B.~Mann} -- {\og Hyperbolicity of the cyclic splitting graph\fg},
  \emph{Geom. Dedic.} \textbf{173} (2014), no.~1, p.~271--280.

\bibitem[Mar91]{margulis}
{\scshape G.~A. Margulis} -- Ergebnisse der Mathematik und ihrer Grenzgebiete
  (3) [Results in Mathematics and Related Areas (3)], \emph{Discrete subgroups
  of semisimple {L}ie groups} \textbf{17~}, Springer-Verlag, Berlin, 1991.

\bibitem[Mim15]{mimura}
{\scshape M.~Mimura} -- {\og {Superrigidity from Chevalley groups into
  acylindrically hyperbolic groups via quasi-cocycles}\fg},  (2015),
  {arXiv:1502.03703}.

\bibitem[MM99]{masur_minsky}
{\scshape H.~A. Masur {\normalfont \smfandname} Y.~N. Minsky} -- {\og Geometry
  of the complex of curves. {I}. {H}yperbolicity\fg}, \emph{Invent. Math.}
  \textbf{138} (1999), no.~1, p.~103--149.

\bibitem[Mon01]{monod}
{\scshape N.~Monod} -- Lecture Notes in Mathematics, \emph{Continuous bounded
  cohomology of locally compact groups} \textbf{1758~}, Springer-Verlag,
  Berlin, 2001.

\bibitem[MT14]{maher_tiozzo}
{\scshape J.~Maher {\normalfont \smfandname} G.~Tiozzo} -- {\og {Random walks
  on weakly hyperbolic groups}\fg},  (2014), {arXiv:1410.4173}.

\bibitem[Osi16]{osin}
{\scshape D.~Osin} -- {\og Acylindrically hyperbolic groups\fg}, \emph{Trans.
  Amer. Math. Soc.} \textbf{368} (2016), no.~2, p.~851--888.

\bibitem[Ser74]{serre_fa}
{\scshape J.-P. Serre} -- {\og Amalgames et points fixes\fg}, in
  \emph{Proceedings of the {S}econd {I}nternational {C}onference on the
  {T}heory of {G}roups ({A}ustralian {N}at. {U}niv., {C}anberra, 1973)},
  Springer, Berlin, 1974, p.~633--640. Lecture Notes in Math., Vol. 372.

\bibitem[Sha00]{shalom}
{\scshape Y.~Shalom} -- {\og Rigidity of commensurators and irreducible
  lattices\fg}, \emph{Invent. Math.} \textbf{141} (2000), no.~1, p.~1--54.

\bibitem[Wad13]{Wad}
{\scshape R.~Wade} -- {\og Johnson homomorphisms and actions of higher-rank
  lattices on right-angled {A}rtin groups\fg}, \emph{J. London Math. Soc.}
  \textbf{88} (2013), no.~2, p.~860--882.

\bibitem[{\.Z}uk03]{zuk}
{\scshape A.~{\.Z}uk} -- {\og Property ({T}) and {K}azhdan constants for
  discrete groups\fg}, \emph{Geom. Funct. Anal.} \textbf{13} (2003), no.~3,
  p.~643--670.

\end{thebibliography}

\end{document}